\documentclass[preprint,12pt]{elsarticle}
\usepackage{amsmath}
\usepackage{amsfonts}
\usepackage{algorithm}
\usepackage{mathrsfs}
\usepackage{algorithm}
\usepackage{algorithmic}
\usepackage{amsthm}
\usepackage[hidelinks]{hyperref}
\usepackage{tikz}
\usepackage{graphicx}
\usepackage{amssymb}
\usepackage{adjustbox}
\usepackage{booktabs}
\usepackage{array}
\usepackage{multirow}
\usepackage{caption}
\usepackage{subcaption}
\usepackage[a4paper,tmargin=1.2in, bmargin=1in, lmargin=1.1in, rmargin=1in, headheight=13.6pt]{geometry}
\journal{Journal of Mathematics}
\theoremstyle{plain}
\newtheorem{theorem}{\bfseries Theorem}[section]

\newtheorem{lemma}{\bfseries Lemma}[section]

\theoremstyle{remark}
\newtheorem{remark}{\bfseries Remark}[section]
\newtheorem{example}{\bfseries Example}[section]
\newtheorem{problem}{\bfseries Problem}[section]

\date{}

\begin{document}
\begin{frontmatter}
\title{Nearest structured matrix having an eigenvalue with prescribed lower bounds of algebraic and geometric multiplicities
}
\author[inst1]{H. Lalhriatpuia}
\affiliation[inst1]{
    organization={Department of Mathematics and Computer Science},
    addressline={Mizoram University},
    state={Mizoram},
    country={India}
}

\author[inst1]{Tanay Saha\corref{cor1}}
\cortext[cor1]{Corresponding author}

\author[inst2]{Punit Sharma}
\affiliation[inst2]{
    organization={Indian Institute of Technology Delhi},
    addressline={New Delhi},
    country={India}
}
\begin{abstract}
We study the problem of perturbing a given matrix $X$ belonging to a subspace $\mathcal{S}$ of linearly structured matrices in $\mathbb{R}^{n \times n}$ to its nearest counterpart $Y = X + \Delta$, where $\Delta \in \mathcal{S}$ and $Y$ possesses an eigenvalue with prescribed lower bounds on its algebraic multiplicity and geometric multiplicity.
The proposed framework takes care of both cases where the target eigenvalue is known and where it is treated as an unknown decision variable to be determined jointly with the structured perturbation. 
We establish necessary and sufficient conditions for the existence of such matrices by expressing the feasibility constraints through Jordan-chain relations in structural coordinates. This allows us to formulate the problem as a nonlinear constrained nested optimization that minimizes the Frobenius norm of the perturbation. 
To solve this, we implemented a two-level strategy, utilizing MATLAB’s \texttt{fmincon} for continuous inner optimization via a multi-start Sequential Quadratic Programming approach, while the outer level solves a discrete optimization problem. 
%
Comprehensive numerical experiments on various classes of structured matrices, including comparisons with existing methods, demonstrate the effectiveness and accuracy of the proposed approach. 
\end{abstract}

\begin{keyword}
Structured matrix\sep algebraic multiplicity\sep geometric multiplicity\sep Jordan chain\sep constrained optimization problem 

\textbf{Mathematics subject classification:} 15A18  \sep 15A20 \sep65F15 \sep 65F35  \sep 65K10
\end{keyword}
\end{frontmatter}
\section{Introduction}

Let $\mathcal S \subseteq \mathbb F^{n \times n}$, where $\mathbb F \in \{\mathbb R,\mathbb C\}$ denote a class of matrices that satisfies certain structure and let $\mathbb P$ be some matrix property. The nearness problem for matrices consists of finding, for a given matrix $X$, the nearest matrix (with respect to some prescribed norm) from the structured class $\mathcal S$ satisfying $\mathbb P$. If the distance between the given matrix $X$ and the nearest matrix from $\mathcal S$ is large, then the property $\mathbb P$ of $X$ is robust, and if the distance is small, then the original matrix is more likely to be ill-conditioned or more sensitive to perturbations, and some remedial actions need to be taken. The matrix nearness problems have been studied extensively in the literature, see~\cite{GilS2024,higham1988matrix} and references therein.


The solution to such nearness problems is useful in determining stability, resonance, and long-term behavior of dynamical systems. In real-world settings, the matrices are not arbitrary but possess some structures. 
When such structures are present, they are often  preserved during perturbations or approximations, both for interpretability and for numerical stability \cite{gray2006toeplitz,horn2012matrix,trefethen2022numerical}.

In this paper, for a given matrix $X \in \mathcal{S} \subseteq \mathbb R^{n \times n}$, we study the problem of finding the smallest perturbation $\Delta \in \mathcal S$ such that $X+\Delta$ has an eigenvalue with prescribed lower bounds of algebraic and geometric multiplicities. More precisely, we consider the following problems:

Let $X \in \mathcal{S} $, $\lambda \in \mathbb R$, where $\mathcal S \subseteq \mathbb R^{n \times n}$ be a subspace of a particular class of linearly structured matrices. For given integers $m$ and $p$ satisfying $1 \le p \le m \le n$, let us define
\begin{equation}\label{V Set}
\mathcal{V} :=
\left\{
\Delta  \in \mathcal{S} : GM(\mu; X+\Delta) \ge p,\;
AM(\mu; X+\Delta) \ge m, \ \mu \in \mathbb{R} 
\right\}
\end{equation}
and
\begin{equation} \label{U Set}
\mathcal{V}_{\lambda}
:=
\left\{
\Delta \in \mathcal{S} :
GM(\lambda; X+\Delta) \ge p,\;
AM(\lambda; X+\Delta) \ge m
\right\},
\end{equation}
where $GM(\rho; X)$ and $AM(\rho; X)$, respectively, stand for the \emph{geometric multiplicity (GM)} and the \emph{algebraic multiplicity (AM)} of $\rho$ as an eigenvalue of the matrix $X$.

\begin{problem} [Finding the nearest structured matrix having an eigenvalue with prescribed lower-bounded multiplicities]
For a given matrix $X \in  \mathcal S \subseteq \mathbb R^{n \times n}$, find an optimal perturbation $ \widehat \Delta \in \mathcal{V}$ with minimal Frobenius norm. Specifically, we aim to compute
\begin{equation}\label{eq:prob1}
    \widehat \Delta \in \arg \min_{\Delta \in \mathcal{V}} {\|\Delta\|}_F.\tag{$\mathcal P$}
\end{equation}
where ${\|\cdot\|}_F$ stands for the Frobenius norm. 
\end{problem}

\begin{problem}[Finding the nearest structured matrix having a given eigenvalue with prescribed lower-bounded multiplicities]
For a given matrix $X \in \mathcal{S} \subseteq \mathbb R^{n\times n}$, and  a target eigenvalue $\lambda \in \mathbb R$, find an optimal perturbation $ \widehat \Delta_\lambda \in \mathcal{V}_{\lambda}$ with minimal Frobenius norm. Specifically, we aim to compute
\begin{equation}\label{eq:prob2}
    \widehat \Delta_\lambda \in \arg \min_{\Delta \in \mathcal V_{\lambda} }{\|\Delta\|}_F.\tag{$\mathcal P_\lambda$}
\end{equation}
\end{problem}



The problem of computing the nearest matrix with prescribed eigenvalues has a rich history. Malyshev \cite{malyshev1999formula} gave a singular-value formula for the two-norm distance from a matrix to the set of matrices with multiple eigenvalues, providing a rigorous form of the Wilkinson distance. 
Higham, in his Ph.D. thesis \cite{higham1985nearness} and subsequent works \cite{higham1988computing,higham1988matrix,higham2002computing}, investigated several nearness problems, providing a key connection to the problems considered in this work. Extensions have considered fixing two eigenvalues by minimal perturbations \cite{lippert2005fixing}, as well as the computational aspects of determining the nearest matrix with two prescribed eigenvalues \cite{nazari2010computational}. The problem has also been investigated for eigenvalues with bounded multiplicities \cite{armentia2020nearest}, and Mengi~\cite{mengi2011locating}  studied the nearest matrix problem with prescribed algebraic multiplicity. Meanwhile, Saha~\cite{Saha24082026} considered the nearest structured polynomial matrix problem with an eigenvalue having prescribed geometric multiplicity.
More recent contributions include computational schemes for the nearest matrix with prescribed partial eigenvalues and its applications \cite{gracia2005nearest,kokabifar2016nearest,lalhriatpuia2025symmetric}. Together, all these works highlight both theoretical characterizations and computational strategies for matrix nearness problems under diverse spectral and structural constraints.

Some studies have also focused on structure-preserving versions of the distance problems on matrices. 
Such formulations are important in physical applications, where the perturbation must maintain symmetry, Toeplitz, or Hamiltonian structures \cite{borsdorf2012structured,mehrmann2001structure,nunez2019structured}. 
These works showed that structural constraints can significantly change both the geometry and difficulty of the problem, making it essential to design algorithms that respect the given linear structures of the matrices.


However, most studies assume that the eigenvalues are known or fixed. 
In many cases, the desired multiplicity structure is specified while the eigenvalue $\lambda$ is given \cite{armentia2020nearest,butta2015differential,kressner2014generalized,mengi2011locating}, and the structure of the updated matrix is not preserved. 
When an eigenvalue with a specified multiplicity structure needs to be determined while preserving the inherent matrix structure, the problem becomes significantly more challenging and, to the best of our knowledge, remains open. This situation often occurs in applications where the eigenvalue is not fixed but arises naturally from the structural constraints of the system. 

\subsection{Contribution and outline of the paper}
In Section~\ref{section 2}, we introduce the notation and definitions that will be used throughout the paper.

To address the problems~\eqref{eq:prob1} and~\eqref{eq:prob2}, in Section~~\ref{sec:theorem}, we provide a complete characterization for the existence of matrices in $\mathcal V$ and $\mathcal V_\lambda$. We establish necessary and sufficient conditions by expressing the feasibility constraints through Jordan-chain relations in structural coordinates.

This characterization is then used, in Section~\ref{Reformulation}, to reformulate the nearest matrix problems in~\eqref{eq:prob1} and~\eqref{eq:prob2} as a nonlinear constrained optimization problem that minimizes the Frobenius norm of the structured perturbation.
To solve the reformulated problem, we propose a two-level strategy consisting of a continuous inner optimization, solved using MATLAB's \texttt{fmincon} with a multi-start Sequential Quadratic Programming scheme, and a discrete outer optimization that systematically explores all admissible Jordan-chain configurations generated from integer partitions satisfying the prescribed multiplicity constraints. To the best of our knowledge, this is the first work to tackle the problem where the target eigenvalue with specified multiplicities is unknown and  determined jointly with the structured perturbation. 

The approach is validated through extensive numerical experiments, in Section~\ref{sec numerical}, on various classes of structured matrices and comparisons with existing methods, confirming both the feasibility and accuracy of the proposed framework.
We show that the proposed framework unifies and generalizes earlier formulations, naturally reducing to classical results when $\lambda$ is fixed or becoming a joint optimization when $\mu$ is treated as a variable. In several numerical examples, the optimal matrix satisfying the multiplicity constraints coming out to be a defective matrix. This shows that the solutions to problems~\eqref{eq:prob1} and~\eqref{eq:prob2} bound the structured distance to defectivity from above. 


\section{Preliminaries}\label{section 2}
In this section, we introduce the basic notation and terminology in the form that will be used throughout the paper. 

\subsection{Vectorization of a Matrix}

An important tool in our reformulation of problems~\eqref{eq:prob1} and~\eqref{eq:prob2} is the \emph{vectorization operator}. 
For a matrix $A = (x_{ij}) \in \mathbb{R}^{n \times n}$, 
the vectorization $\mathrm{vec}(A) \in \mathbb{R}^{n^2}$ is obtained 
by stacking the columns of $A$ on top of their consecutive columns
\[
\mathrm{vec}(A) := 
\begin{bmatrix}
x_{11} & x_{21} & \cdots & x_{n1} &
x_{12} & x_{22} & \cdots & x_{n2} &
\cdots &
x_{1n} & x_{2n} & \cdots & x_{nn}
\end{bmatrix}^{\top}.
\]
For conformable matrices $A,B,C$ over $\mathbb{R}$, the following vectorization identity holds.
\begin{equation}\label{eq:veciden}
  \operatorname{vec}(A B C) = (C^\top \otimes A) \operatorname{vec}(B),
\end{equation}
where $\otimes$ is the Kronecker product.

\subsection{Space of Structured Matrices}

Suppose $A$ belongs to a subspace $\mathcal{S} \subseteq \mathbb R^{n\times n}$ of linearly structured matrices and let $l=\text{dim}(\mathcal S)$. If 
$E_1, E_2, \dots, E_l$ be the basis matrices defining the structure class $\mathcal S$, then any matrix $A \in \mathcal{S}$ can be expressed as
\begin{equation}\label{eq:basisrep}
    A = \sum_{j=1}^l \alpha_j E_j, \quad \alpha_j \in \mathbb{R}.
\end{equation}
The coefficients $\alpha_j$'s are called as the \emph{structured parameters} with respect to the basis $E_1, E_2, \dots, E_l$. By defining the \emph{transformation matrix} 
\begin{equation}\label{Dmatrix}
    D := \begin{bmatrix}
    \mathrm{vec}(E_1) & \mathrm{vec}(E_2) &  \cdots & \mathrm{vec}(E_l)
\end{bmatrix},
\end{equation}
and the \emph{parameter vector}
\begin{equation}\label{vec1matrix}
    \mathrm{vec}_1(A) := (\alpha_1, \alpha_2, \dots,\alpha_l)^\top,
\end{equation}
the relation in~\eqref{eq:basisrep} can be expressed as
\begin{equation}
    \mathrm{vec}(A) = D \, \mathrm{vec}_1(A).
\end{equation}

 \begin{example}[Symmetric Matrices]
 Let $\mathcal S$ be the subspace of real symmetric matrices of size $3 \times 3$ and let $ A\in \mathcal S$ with the structure parameters
 $\{a,b,c,d,e,f\}$ corresponding to the standard basis $\{E_{11},E_{12}+E_{21},E_{13}+E_{31},E_{22},E_{23}+E_{32},E_{33}\}$ of $\mathcal S$, where $E_{ij}$ has zero everywhere except a one at  $\{ij\}^{th}$ position. Then 
 \[
 \mathrm{vec}_1(A) =
 \begin{bmatrix}
 a & b & c & d & e & f
 \end{bmatrix}^\top, \quad
 \mathrm{vec}(A) =
 \begin{bmatrix}
 a & b & c & b & d & e & c & e & f
 \end{bmatrix}^\top,
 \]
and we have
 \[
 \mathrm{vec}(A) = D \, \mathrm{vec}_1(A),
 \]
 where $D$ is the transformation matrix given by
 \[
 D =
 \begin{bmatrix}
\mathrm{vec}(E_{11}) & \mathrm{vec}(E_{12}+E_{21})&\mathrm{vec}(E_{13}+E_{31})&\mathrm{vec}(E_{22})&\mathrm{vec}(E_{23}+E_{32})&\mathrm{vec}(E_{33})
 \end{bmatrix}.
 \] 
 \end{example}

\section{Characterization of matrices in $\mathcal{V}$ and $\mathcal{V}_\lambda$}\label{sec:theorem}

In this section, we present the main theoretical results of the paper and establish that finding a matrix in $\mathcal{V}$ or  $\mathcal{V}_\lambda$ is equivalent to the feasibility of a nonlinear system of vector equations. 

The following lemma gives a necessary and sufficient condition for a matrix $Y$ to have an eigenvalue $\mu$ with prescribed lower bounds of algebraic and geometric multiplicities. 

\begin{lemma}
\label{lemma:GM_AM_characterization_fixed}
Let $Y\in\mathbb{R}^{n\times n}$ and let integers $1\le p\le m\le n$ be given. Then $Y$ has an eigenvalue $\mu \in \mathbb{R}$ with 
\[
\mathrm{GM}(\mu;Y)\ge p \quad\text{and}\quad \mathrm{AM}(\mu;Y)\ge m
\]
if and only if there exist an integer $r$ with $m\ge r \ge p$,  positive integers $k_1\ge k_2\ge\cdots\ge k_r \geq 1$ such that
\[
\sum_{i=1}^r k_i = m,
\]
and vectors
\[
\{v_j^{(i)}\in\mathbb{R}^n:\; i=1,2,\dots,r;~ j=1,2,3,\dots,k_i\}
\]satisfying the following conditions:
\begin{enumerate}
\item[{\rm(a)}] $(Y-\mu I)v_1^{(i)}=0$ for $i=1,2,\dots,r$.
 \item[{\rm(b)}] The set of vectors
 $\{v_1^{(i)} : i = 1,2, \dots, r\}
 \ \text{is linearly independent in } \mathbb{R}^n$.
 \item[{\rm(c)}] $(Y-\mu I)v_j^{(i)}=v_{j-1}^{(i)}$ for each $i=1,2,\dots,r$ and $j=2,3,\dots,k_i$.
\end{enumerate}
\end{lemma}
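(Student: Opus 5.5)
The plan is to prove both directions using the nilpotent operator $N := Y-\mu I$. Necessity will come from the real Jordan structure of $N$ on the generalized eigenspace of $\mu$. Sufficiency will come from a linear-independence argument for the chain vectors.

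\emph{Necessity.} Suppose $\mu\in\mathbb{R}$ is an eigenvalue with $g:=\mathrm{GM}(\mu;Y)\ge p$ and $a:=\mathrm{AM}(\mu;Y)\ge m$. Since $\mu$ is real, $N$ is a real matrix, and its restriction to the real generalized eigenspace $\ker N^{n}$ (of dimension $a$) is nilpotent. By the Jordan form of a real nilpotent operator, this space has a real basis consisting of Jordan chains
\[
w_1^{(i)},\dots,w_{s_i}^{(i)},\qquad i=1,\dots,g,
\]
with $N w_1^{(i)}=0$, $N w_j^{(i)}=w_{j-1}^{(i)}$, block sizes $s_1\ge\dots\ge s_g\ge 1$, $\sum_i s_i=a$, and $\{w_1^{(i)}\}$ linearly independent.

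Next I choose $r:=\min(g,m)$, so that $p\le r\le m$. I take the lengths $k_i$ greedily: first $k_1=s_1$, then $k_2=s_2$, and so on, until the running total would exceed $m-(\text{number of remaining chains})$. The next chain receives the partial length needed, and every remaining chain up to index $r$ gets length $1$.

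This choice works in both cases. If $r=m$, all $k_i=1$. If $r=g$, the total capacity is $a\ge m$. In either case $\sum k_i=m$, $k_i\le s_i$, and the $k_i$ are nonincreasing because the $s_i$ are.

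Setting $v_j^{(i)}:=w_j^{(i)}$ for $j\le k_i$ keeps the bottom segment of each chain. Truncating a chain from the top preserves relations (a) and (c), and the heads $v_1^{(i)}$ stay linearly independent, which gives (b).

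\emph{Sufficiency.} From (a) and (b), $\dim\ker N\ge r\ge p$, so $\mathrm{GM}(\mu;Y)\ge p$. For the algebraic multiplicity I will show that all $m$ vectors $v_j^{(i)}$ are linearly independent. By (c), $N^{t}v_j^{(i)}=v_{j-t}^{(i)}$ for $t<j$, and $N^{t}v_j^{(i)}=0$ for $t\ge j$.

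Suppose $\sum_{i,j}c_{ij}v_j^{(i)}=0$ with not all $c_{ij}$ zero. Let $K$ be the largest $j$ with some $c_{ij}\neq0$, and apply $N^{K-1}$. Every term with $j<K$ vanishes, and the terms with $j=K$ become $\sum_i c_{iK}v_1^{(i)}=0$. By (b), all $c_{iK}=0$, which contradicts the choice of $K$.

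All the $v_j^{(i)}$ lie in $\ker N^{k_1}$, which is contained in the generalized eigenspace of $\mu$. Since they are $m$ independent vectors, $\mathrm{AM}(\mu;Y)\ge m$.

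The main obstacle is the necessity direction. There one must produce chains whose lengths sum to exactly $m$, with a number of chains between $p$ and $m$ and nonincreasing lengths, starting from the true Jordan structure. The greedy truncation above handles this, and it relies on the reality of $\mu$ to keep every chain vector real.
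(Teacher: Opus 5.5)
Your proof is correct and follows the same route as the paper: for necessity you truncate the Jordan chains of $Y-\mu I$ at $\mu$, and for sufficiency you obtain $\mathrm{GM}$ from the independent heads and $\mathrm{AM}$ from $m$ independent chain vectors. You also make explicit several points the paper only asserts: the concrete choice $r=\min(\mathrm{GM},m)$ with greedy lengths $k_i\le s_i$, the reality of the chains, and the $N^{K-1}$ argument that all $m$ vectors $v_j^{(i)}$ are linearly independent.
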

\begin{proof}
($\Rightarrow$)
First suppose that $\mu$ is an eigenvalue of $Y$ with
$\mathrm{GM}(\mu;Y) \ge p$ and $\mathrm{AM}(\mu;Y) \ge m$.
Let $r_0 = \mathrm{GM}(\mu;Y)$.
In the Jordan canonical form of $Y$, there exist $r_0$ Jordan blocks corresponding to the eigenvalue $\mu$ with block sizes
\[
\ell_1, \ell_2, \dots, \ell_{r_0} \ge 1,
\qquad \text{and} \qquad
\sum_{i=1}^{r_0} \ell_i = \mathrm{AM}(\mu;Y).
\]
Since $\mathrm{AM}(\mu;Y) \ge m$, we may choose an integer
$r$ with $p \le r \le r_0$ and integers
$k_i$ satisfying $1 \le k_i \le \ell_i$ for $i = 1,2, \dots, r$ with 
\[
\sum_{i=1}^r k_i = m.
\]
By reordering these $r$ numbers if necessary, w.l.o.g., we can ensure that the sequence satisfies $k_1 \ge k_2 \ge \dots \ge k_r.$

For each selected $i^{th}$ block, truncate the corresponding Jordan chain of generalized eigenvectors after
$k_i$ vectors, yielding vectors
$\{ v_j^{(i)} \}_{j=1}^{k_i}$.
These vectors satisfy
\[
(Y - \mu I)v_1^{(i)} = 0,
\qquad
(Y - \mu I)v_j^{(i)} = v_{j-1}^{(i)}, \quad j = 2,3,\dots, k_i,
\]
and the vectors $\{ v_1^{(i)} \}_{i=1}^r$ are linearly independent.
This proves the `if' part.

\medskip

($\Leftarrow$)
Conversely, suppose that there exist integers $m \ge r \ge p$ and
$k_1\ge k_2 \ge \cdots \ge k_r \ge 1$ with $\sum_{i=1}^r k_i = m$, together with vectors
$\{ v_j^{(i)} \}$ satisfying (a)-(c). The conditions in (a) and (b) imply that 
$\{ v_1^{(i)} \}_{i=1}^r$ are $r$ linearly independent eigenvectors of $Y$
corresponding to $\mu$.
Thus, we have
\[
\mathrm{GM}(\mu;Y) \ge r \ge p.
\]
Further, the condition in (c) shows that each collection
$\{ v_j^{(i)} \}_{j=1}^{k_i}$ forms some part of $i^{th}$ Jordan chain of length $k_i$.
Therefore, the generalized eigenspace of $Y$ associated with $\mu$
contains at least $\sum_{i=1}^r k_i = m$ linearly independent generalized
eigenvectors, implying
\[
\mathrm{AM}(\mu;Y) \ge m.
\]
This completes the proof.
\end{proof}

\begin{remark}\label{remark independent} 
Since the eigenvectors are inherently nonzero and may be scaled arbitrarily, a normalization condition can be imposed to exclude the vectors very close to zero. In Lemma~\ref{lemma:GM_AM_characterization_fixed},
the independent eigenvectors $\{v_1^{(1)}, v_1^{(2)}, \dots, v_1^{(r)}\}$ associated with $\mu$ may be normalized so that 
\begin{equation}
\label{normalization vector}
    {\|v_1^{(i)}\|}_2 = 1 \ \ \text{for all} \ \ i = 1,2, \dots, r,
\end{equation}
where ${\|\cdot\|}_2$ stands for the 2-norm. 
Furthermore, when desirable, 
they can be orthogonalized to satisfy 
\begin{equation}
\label{orhthogonality}
    v_1^{(i)\top} v_1^{(j)} = 0 \ \ \text{for} \ \ i \neq j
\end{equation}
without affecting the validity of the lemma.
\end{remark}

\begin{theorem} \label{thm:structured_existence_clean}
 Let  
$\mathcal{S} \subseteq \mathbb{R}^{n \times n}$ be the subspace of some linearly structured matrices and $D$ be the transformation matrix of a basis of $\mathcal{S}$ as defined in~\eqref{Dmatrix}. Let $X \in \mathcal{S}$ and let integers $m,p$ satisfy $1 \le p \le m \le n$. Then
there exist $\mu \in \mathbb R$ and a structured perturbation $\Delta \in \mathcal{S}$ such that $Y = X+\Delta$ has an eigenvalue $\mu $ satisfying
\[
\mathrm{GM}(\mu;Y)\ge p, \qquad \mathrm{AM}(\mu;Y)\ge m
\]
if and only if there exist integers  $m \ge r \ge p$, $k_1 \ge k_2 \ge \cdots \ge k_r$, with $\displaystyle \sum_{i=1}^{r}k_i=m$ and  a set of vectors $\{v_j^{(i)} :\; i=1,2,\dots,r;~ j=1,2,\dots,k_i\}$ satisfying~\eqref{normalization vector} and~\eqref{orhthogonality}, and a parameter vector $z \in \mathbb{R}^l$, such that the structured feasibility system
\begin{equation}\label{eq:structured_system_clean}
\mathcal{J}z = \mathcal{Q}
\end{equation}
is satisfied, where $\mathcal{J} \in \mathbb{R}^{nm \times l}$ and $\mathcal{Q} \in \mathbb{R}^{nm}$ are defined by
\begin{equation}\label{PQ eqn}
   \mathcal{J}=  \begin{bmatrix}
(v_1^{(1)\top}\!\otimes I_n)D\\
(v_2^{(1)\top}\!\otimes I_n)D\\
\vdots\\
(v_{k_r}^{(r)\top}\!\otimes I_n)D
\end{bmatrix},
\qquad
\mathcal{Q}=
\begin{bmatrix}
\mu v_1^{(1)}-Xv_1^{(1)}\\[3pt]
\mu v_2^{(1)}+v_1^{(1)}-Xv_2^{(1)}\\
\vdots\\
\mu v_{k_r}^{(r)}+v_{k_r-1}^{(r)}-Xv_{k_r}^{(r)}
\end{bmatrix},
\end{equation}
where $I_n$ is the identity matrix of size $n \times n$ and $\otimes$ is the Kronecker product. 
\end{theorem}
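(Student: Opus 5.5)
The plan is to reduce the theorem to Lemma~\ref{lemma:GM_AM_characterization_fixed} applied to $Y = X+\Delta$, and then translate the Jordan-chain relations (a) and (c) into linear equations in the structured parameters of $\Delta$. The key identity is \eqref{eq:veciden}. For $\Delta\in\mathcal S$ with $z=\mathrm{vec}_1(\Delta)$ and any $v\in\mathbb R^n$ we have
\[
\Delta v=\mathrm{vec}(I_n\Delta v)=(v^\top\otimes I_n)\mathrm{vec}(\Delta)=(v^\top\otimes I_n)Dz .
\]
Hence relation (a) for chain $i$, namely $(X+\Delta-\mu I)v_1^{(i)}=0$, is equivalent to
\[
(v_1^{(i)\top}\otimes I_n)Dz=\mu v_1^{(i)}-Xv_1^{(i)} .
\]
Likewise, relation (c), $(X+\Delta-\mu I)v_j^{(i)}=v_{j-1}^{(i)}$, is equivalent to
\[
(v_j^{(i)\top}\otimes I_n)Dz=\mu v_j^{(i)}+v_{j-1}^{(i)}-Xv_j^{(i)} .
\]
Stacking these $\sum_i k_i=m$ block equations, each of size $n$, in the order $(i,j)$ gives exactly $\mathcal Jz=\mathcal Q$ with $\mathcal J\in\mathbb R^{nm\times l}$ as in \eqref{PQ eqn}. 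I would state this equivalence first as a standalone observation, since both directions use it.

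For ($\Rightarrow$), suppose $\Delta\in\mathcal S$ and $\mu$ exist. Apply Lemma~\ref{lemma:GM_AM_characterization_fixed} to $Y=X+\Delta$ to obtain $r$, the $k_i$, and chains satisfying (a)–(c). Next impose the normalization \eqref{normalization vector} and orthogonality \eqref{orhthogonality} on the leading vectors while keeping (a)–(c) valid; this step is the main obstacle. Scaling a whole chain $\{v_j^{(i)}\}_j$ by a nonzero constant preserves (a) and (c), so normalization alone is harmless. Orthogonalizing the $v_1^{(i)}$ is more delicate, because it must be done together with the higher chain vectors. I would handle it by choosing an invertible upper-triangular (Gram–Schmidt) matrix $T$ acting on the leading vectors and applying the same combinations level by level. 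Since $k_1\ge k_2\ge\cdots\ge k_r$, I take $w_j^{(i)}=\sum_{s\le i}T_{si}v_j^{(s)}$. Combining only chains $s\le i$, which have $k_s\ge k_i$, keeps every $v_j^{(s)}$ with $j\le k_i$ defined. Linearity of $(Y-\mu I)$ then shows that the new family still satisfies (a) and (c). Gram–Schmidt of $v_1^{(1)},\dots,v_1^{(r)}$ in this order is indeed upper triangular, and it yields orthonormal leading vectors, which also secures (b). Finally, setting $z=\mathrm{vec}_1(\Delta)$, the observation above gives $\mathcal Jz=\mathcal Q$.

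For ($\Leftarrow$), start from $r$, the $k_i$, the vectors, and $z$ with $\mathcal Jz=\mathcal Q$. Define $\Delta=\sum_{t=1}^l z_tE_t\in\mathcal S$, so that $\mathrm{vec}(\Delta)=Dz$. Reading $\mathcal Jz=\mathcal Q$ block by block via the identity recovers (a) and (c) for $Y=X+\Delta$. Condition (b) follows from \eqref{normalization vector} and \eqref{orhthogonality}, because orthonormal vectors are linearly independent. Lemma~\ref{lemma:GM_AM_characterization_fixed} then gives $\mathrm{GM}(\mu;Y)\ge p$ and $\mathrm{AM}(\mu;Y)\ge m$. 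Note that $\mu$ appears inside $\mathcal Q$, so it is part of the data being quantified over; the argument for fixed $\lambda$ is identical.
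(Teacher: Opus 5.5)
Your proposal is correct and follows the paper's proof essentially step for step. The identity $\Delta v=(v^\top\otimes I_n)Dz$ turns the Jordan-chain relations of Lemma~\ref{lemma:GM_AM_characterization_fixed} into the stacked system $\mathcal{J}z=\mathcal{Q}$, and the converse rebuilds $\Delta=\mathrm{mat}(Dz)\in\mathcal S$ and reapplies the lemma. The one place you go beyond the paper is the orthonormalization of the leading vectors, which the paper simply delegates to Remark~\ref{remark independent}. Orthogonalizing the $v_1^{(i)}$ alone would break relation (c); your level-by-level upper-triangular Gram--Schmidt, which combines chain $i$ only with the longer chains $s\le i$, is exactly what keeps (c) intact, so it fills a step the paper leaves unjustified.
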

\begin{proof}
First observe that by taking $A=I_n$, $B=\Delta\in\mathbb{R}^{n\times n}$ and $C=v\in\mathbb{R}^n$
in the vectorization identity~\eqref{eq:veciden}, we have 
\begin{equation}\label{eq:vec_key}
\operatorname{vec}(\Delta v) = (v^\top \otimes I_n) \operatorname{vec}(\Delta).
\end{equation}
We will use this repeatedly in the proof. 
\medskip

($\Rightarrow$) \textbf{(From Multiplicities to the Feasibility System):} 
Suppose that for a given structured matrix $X \in \mathcal{S}$, there exists a structured perturbation $\Delta \in \mathcal{S}$ such that the perturbed matrix $Y = X + \Delta$ has an eigenvalue $\mu$ with $\mathrm{GM}(\mu;Y) \ge p$ and $\mathrm{AM}(\mu;Y) \ge m$. 
%
From Lemma~\ref{lemma:GM_AM_characterization_fixed}, it implies that there exist $r~ (\ge p)$ integers $k_1 \ge k_2 \ge \dots \ge k_r$ summing to $m$ and a set of vectors $\{v_j^{(i)}\}$ such that the leading vectors $\{v_1^{(1)}, v_1^{(2)}, \dots, v_1^{(r)}\}$ are linearly independent, and the following relations hold for each $i = 1,2, \dots, r:$
\begin{equation}\label{eq:temp1}
\begin{aligned}
(Y - \mu I_n)v_1^{(i)} &= 0, \\
(Y - \mu I_n)v_j^{(i)} &= v_{j-1}^{(i)}, \quad \text{for } j = 2,3, \dots, k_i.
\end{aligned}
\end{equation}
In view of Remark~\ref{remark independent}, the set $\{v_1^{(1)}, v_1^{(2)}, \dots, v_1^{(r)}\}$ can be normalized to satisfy~\eqref{normalization vector} and~\eqref{orhthogonality}.  
Substituting $Y = X + \Delta$ in~\eqref{eq:temp1} and isolating the terms involving $\Delta$ on the left side, we obtain
\begin{equation}\label{eq:temp2}
\begin{aligned}
\Delta v_1^{(i)} &= \mu v_1^{(i)} - X v_1^{(i)}, \\
\Delta v_j^{(i)} &= \mu v_j^{(i)} + v_{j-1}^{(i)} - X v_j^{(i)}, \quad \text{for } j = 2,3, \dots, k_i.
\end{aligned}
\end{equation}
Next, we apply the vectorization operator to both sides in~\eqref{eq:temp2}, and use the identity from \eqref{eq:vec_key} and the parametrization $\operatorname{vec}(\Delta) = Dz$ to obtain 
\begin{equation}\label{eq:temp3}
\begin{aligned}
(v_1^{(i)\top} \otimes I_n) Dz &= \mu v_1^{(i)} - X v_1^{(i)}, \\
(v_j^{(i)\top} \otimes I_n) Dz &= \mu v_j^{(i)} + v_{j-1}^{(i)} - X v_j^{(i)}, \quad \text{for } j = 2,3, \dots, k_i.
\end{aligned}
\end{equation}
For each fixed pair $(i, j)$, both these equations represent a block of $n$ scalar equations. By vertically stacking these blocks for $j = 2,3, \dots, k_i$ and $i = 1,2, \dots, r$, we form the large block-matrix system. The left-hand side matrices in~\eqref{eq:temp3} stack to form exactly $\mathcal{J}$ and the right-hand side vectors in~\eqref{eq:temp3} stack to form $\mathcal{Q}$ as defined in \eqref{PQ eqn}. This yields the system $\mathcal{J}z = \mathcal{Q}$. Since the valid perturbation $\Delta$ exists by assumption, this system is necessarily consistent for the associated $z \in \mathbb{R}^l$.

\medskip

\textbf{($\Leftarrow$) (From the Feasibility System to Multiplicities):} 
Conversely, suppose that there exist an integer $m \ge r \ge p$, integers $\{k_i\}_{i=1}^r$ summing to $m$, and vectors $\{v_j^{(i)}\}$ (with $\{v_1^{(i)}\}$ satisfying~\eqref{normalization vector} and~\eqref{orhthogonality}) such that $\mathcal{J}z = \mathcal{Q}$ is consistent for some $z \in \mathbb{R}^l$.

Construct $\Delta \in \mathcal{S}$ such that $\operatorname{vec}(\Delta) = Dz$, and let $Y = X + \Delta$. Because $\mathcal{J}z = \mathcal{Q}$ is consistent, every block equation within the stacked system holds. Reversing the vectorization process for the $j^{th}$ block of the $i^{th}$ system of equations
\[
(v_j^{(i)\top} \otimes I_n) Dz = (v_j^{(i)\top} \otimes I_n) \operatorname{vec}(\Delta) = \operatorname{vec}(\Delta v_j^{(i)}) = \Delta v_j^{(i)}.
\]
Equating this back to the corresponding block in $\mathcal{Q}$ recovers
\begin{align*}
\Delta v_1^{(i)} &= \mu v_1^{(i)} - X v_1^{(i)}, \\
\Delta v_j^{(i)} &= \mu v_j^{(i)} + v_{j-1}^{(i)} - X v_j^{(i)}, \quad \text{for }  j = 2,3, \dots, k_i.
\end{align*}
Rearranging the terms to group $X + \Delta$ yields the Jordan chain conditions for $Y$
\[
(Y - \mu I_n)v_1^{(i)} = 0 \quad \text{and} \quad (Y - \mu I_n)v_j^{(i)} = v_{j-1}^{(i)}.
\]
Since the $r$ leading vectors $\{v_1^{(i)}\}$ are linearly independent due to~\eqref{normalization vector} and~\eqref{orhthogonality}, from Lemma~\ref{lemma:GM_AM_characterization_fixed} we have $\mathrm{GM}(\mu;Y) \ge p$  and $\mathrm{AM}(\mu;Y) \ge m$. This completes the proof.
\end{proof}

Note that, in view of Theorem~\ref{thm:structured_existence_clean}, there exists a $\Delta \in \mathcal V$ if and only if the system of nonlinear vector equations $\mathcal{J}z = \mathcal{Q}$ is feasible. Note that 
the block rows of $\mathcal{J}$ and $\mathcal{Q}$ in  Theorem~\ref{thm:structured_existence_clean} comprise vectors
satisfying~\eqref{normalization vector} and~\eqref{orhthogonality}, resulting in $m$ vector equations. When the eigenvalue $\mu$ is unknown corresponding to Problem~\eqref{eq:prob1}, we group the unknowns into a mixed-domain decision variable 
    $y_1 \in \mathbb{R}^{nm+l+1}$ as follows
    \begin{equation}
    y_1 = \begin{bmatrix} v_1^{(1)} & 
    v_2^{(1)}&
    \cdots &
    v_{k_r}^{(r)} &
    z & \mu \end{bmatrix}^\top.
\end{equation}
    Thus, the feasibility condition of \eqref{eq:structured_system_clean} can be expressed as the root-finding problem 
    \begin{equation}
    \label{f y_1}
        f(y_1) := \mathcal{J}z - \mathcal{Q} = 0.
    \end{equation} 
    Therefore, finding a root for $f(y_1) = 0$ is precisely equivalent to identifying an element within the admissible set $\mathcal{V}$ defined in \eqref{V Set}. 

Similarly, when the eigenvalue $\lambda$ is known in advance 
corresponding to Problem~\eqref{eq:prob2}, let us introduce the variable $y_2 \in \mathbb{R}^{nm + l}$ which excludes the eigenvalue and consists only of the vectors 
$\{v_j^{(i)}\}$ and the structural parameters $z$, as
\begin{equation}
    y_2 =
\begin{bmatrix}
v_1^{(1)} &v_2^{(1)}& \cdots & v_{k_r}^{(r)} & z
\end{bmatrix}^\top.
\end{equation}
Thus, the feasibility condition can be written as the following root-finding problem
\begin{equation}\label{f(y_2)}
    f_\lambda(y_2) := \mathcal{J} z - \mathcal{Q}_\lambda = 0,
\end{equation}
where $\mathcal{Q}_\lambda$ denotes the matrix $\mathcal{Q}$ from~\eqref{PQ eqn} with the variable $\mu$ replaced by the given parameter $\lambda$. Therefore, finding a root for $f(y_2) = 0$ is precisely equivalent to identifying an element within the admissible set $\mathcal{V}_\lambda$ defined in \eqref{U Set}. 

As a summary, we have the following result that characterizes the existence of matrices in $\mathcal V$ (resp. $\mathcal V_\lambda$) as a feasibility of root-finding problem $f(y_1)=0$ 
in~\eqref{f y_1} (resp. $f_\lambda(y_2)=0$ in~\eqref{f(y_2)}).

\begin{theorem}\label{corollary_structured_solutions}
Let $X,p,m,\mathcal S$ and $D$ be as defined in Theorem~\ref{thm:structured_existence_clean}
and let $\mathcal V$ (resp. $\mathcal V_\lambda$) be defined by~\eqref{V Set}(resp.~\eqref{U Set}). Then 
$ \mathcal V$ (resp. $\mathcal V_\lambda$) is nonempty if and only if 
the nonlinear system $f(y_1) = 0$ defined by~\eqref{f y_1} (resp. $f_\lambda(y_2) = 0$ defined by~\eqref{f(y_2)}) admits a solution in the form 
\[
y_1= \begin{bmatrix} v_1^{(1)\top} & v_2^{(1)\top} & \dots & v_{k_r}^{(r)\top} & z_1^\top & \mu \end{bmatrix}^\top ~(resp.~ y_2= \begin{bmatrix} v_1^{(1)\top} & v_2^{(1)\top} & \dots & v_{k_r}^{(r)\top} & z_2^\top  \end{bmatrix}^\top),
\]
where the vectors $\{v_1^{(i)}\}_{i=1}^r$ satisfy the normalization and orthogonality conditions in \eqref{normalization vector} and \eqref{orhthogonality}. 

Moreover, for any solution $y_1$ of $f(y_1)=0$ (resp. $y_2$ of $f_\lambda(y_2)=0$), one can construct a corresponding structured perturbation $\Delta_1 \in \mathcal{V}$ (resp. $\Delta_2 \in \mathcal{V}_\lambda$) as $\Delta_1 = \mathrm{mat}(Dz_1)$ (resp. $\Delta_2 = \mathrm{mat}(Dz_2)$), where $\mathrm{mat}(\cdot)$ denotes the inverse of the vectorization operator and $z_1$ (resp. $z_2$) contains the structural parameters from $y_1$ (resp. $y_2$).
\end{theorem}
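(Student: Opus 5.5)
The plan is to read Theorem~\ref{corollary_structured_solutions} as a repackaging of Theorem~\ref{thm:structured_existence_clean}. Nonemptiness of $\mathcal V$ is exactly the left-hand side of that theorem, and a solution $y_1$ of $f(y_1)=0$ is exactly a certificate for its right-hand side. Likewise, $\mathcal V_\lambda$ is the fixed-eigenvalue specialization. I will treat the two cases in parallel: first the unknown-$\mu$ case, then indicate the changes when $\mu$ is frozen to $\lambda$.

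\emph{Forward direction.} Suppose $\Delta\in\mathcal V$. By definition of~\eqref{V Set} there is $\mu\in\mathbb R$ with $\mathrm{GM}(\mu;X+\Delta)\ge p$ and $\mathrm{AM}(\mu;X+\Delta)\ge m$. Theorem~\ref{thm:structured_existence_clean} then supplies the following data:
\begin{itemize}
\item an integer $r$ with $p\le r\le m$;
\item a partition $k_1\ge\cdots\ge k_r$ of $m$;
\item chains $\{v_j^{(i)}\}$ whose leading vectors satisfy~\eqref{normalization vector} and~\eqref{orhthogonality} (via Remark~\ref{remark independent});
\item the vector $z=\mathrm{vec}_1(\Delta)$, for which $\mathcal J z=\mathcal Q$ holds.
\end{itemize}
Stacking these into $y_1$ with $z_1=z$ gives $f(y_1)=0$. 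The only point to check is the representation $\mathrm{vec}(\Delta)=Dz$. It holds because $\Delta\in\mathcal S$ and $E_1,\dots,E_l$ form a basis, so $z$ is the coordinate vector from~\eqref{eq:basisrep}. For $\mathcal V_\lambda$ the argument is the same: take $\mu=\lambda$ throughout, and then $\mathcal Q$ becomes $\mathcal Q_\lambda$.

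\emph{Converse and the construction of $\Delta_1$, $\Delta_2$.} Given a solution $y_1$ with the stated normalization and orthogonality, define $\Delta_1=\mathrm{mat}(Dz_1)$. This matrix lies in $\mathcal S$, since it equals $\sum_j (z_1)_j E_j$. The ($\Leftarrow$) part of Theorem~\ref{thm:structured_existence_clean}, invoked with this $z_1$ and $\mu$, shows that $X+\Delta_1$ has eigenvalue $\mu\in\mathbb R$ with the required multiplicities, so $\Delta_1\in\mathcal V$. This also yields the ``moreover'' clause, since the construction is explicit. For $y_2$, the same argument applies with $\mu$ replaced by the fixed $\lambda$. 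It gives $\mathrm{GM}(\lambda;X+\Delta_2)\ge p$ and $\mathrm{AM}(\lambda;X+\Delta_2)\ge m$, that is, $\Delta_2\in\mathcal V_\lambda$.

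\emph{Main obstacle.} The substance is already in Theorem~\ref{thm:structured_existence_clean}, so what needs care here is bookkeeping. First, $y_1$ must be interpreted together with the discrete data $r$ and $(k_i)$, which fix the block layout of $\mathcal J$ and $\mathcal Q$. I will make explicit that ``admits a solution'' means: for some admissible choice of these integers. Second, the converse relies on~\eqref{normalization vector} and~\eqref{orhthogonality} to guarantee that the $v_1^{(i)}$ are nonzero and linearly independent, because a zero leading vector would make $f(y_1)=0$ trivially solvable. I will state this explicitly: orthonormal vectors are linearly independent, which is exactly condition (b) of Lemma~\ref{lemma:GM_AM_characterization_fixed}.
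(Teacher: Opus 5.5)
Your proposal is correct and follows the paper's route: the paper gives no separate proof, and presents the result as a restatement of Theorem~\ref{thm:structured_existence_clean} with the unknowns stacked into $y_1$ (resp.\ $y_2$, with $\mu$ frozen to $\lambda$), exactly as you do. Your two bookkeeping remarks make explicit what the paper leaves implicit, and both are correct: the discrete data $r,(k_i)$ are part of what ``admits a solution'' means, and the ``moreover'' clause needs the normalization/orthogonality constraints, since the all-zero choice of chain vectors solves $f(y_1)=0$ for every $z$.
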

%



\section{Reformulation of the nearest matrix problems~\eqref{eq:prob1} and~\eqref{eq:prob2} } \label{Reformulation}

In this section, we utilize the characterization of matrices 
 within the sets $\mathcal{V}$ and $\mathcal{V}_\lambda$  (Theorem~\ref{corollary_structured_solutions}) to find the minimum-norm perturbation from $\mathcal{V}$ and $\mathcal{V}_\lambda$ by reformulating the nearest matrix problems~\eqref{eq:prob1}  and~\eqref{eq:prob2} as constrained nonlinear optimization problems. 

In view of Theorem~\ref{corollary_structured_solutions}, let $y_g$ (for $g \in \{1,2\}$) represent the solutions of $f(y_g) = 0$  for the unknown~\eqref{V Set} and known~\eqref{U Set} eigenvalue cases, respectively. In both cases, our objective is to minimize the squared Frobenius norm of the perturbation
\[
\mathbf{R}(z_g) := \|\Delta_g\|_F^2 = \|\operatorname{vec}(\Delta_g)\|_2^2 = \|Dz_g\|_2^2.
\]

From Theorem~\ref{corollary_structured_solutions}, finding an admissible perturbation $\Delta_1 \in \mathcal{V}$ or $\Delta_2 \in \mathcal{V}_\lambda$ is equivalent to satisfying the feasibility system $f(y_g) = \mathcal{J}z_g - \mathcal{Q} = 0$, alongside the normalization and orthogonality constraints given in ~\eqref{normalization vector} and~\eqref{orhthogonality}. Recall, the block matrices $\mathcal{J}$ and $\mathcal{Q}$ are defined in \eqref{PQ eqn} and given as 
\[
\mathcal{J} =  \begin{bmatrix} (v_1^{(1)\top} \otimes I_n)D \\ (v_2^{(1)\top} \otimes I_n)D\\[2pt] \vdots \\[2pt] (v_{k_r}^{(r)\top} \otimes I_n)D \end{bmatrix}, ~
\mathcal{Q} = \begin{bmatrix} \mu v_1^{(1)}-Xv_1^{(1)}\\[3pt] \mu v_2^{(1)}+v_1^{(1)}-Xv_2^{(1)}\\ \vdots\\ \mu v_{k_r}^{(r)}+v_{k_r-1}^{(r)}-Xv_{k_r}^{(r)} \end{bmatrix}, ~ \mathcal{Q}_\lambda = \begin{bmatrix} \lambda v_1^{(1)} - X v_1^{(1)} \\[2pt] \lambda v_2^{(1)}+v_1^{(1)}-Xv_2^{(1)}\\ \vdots \\[2pt] (\lambda v_{k_r}^{(r)} + v_{k_r-1}^{(r)}) - X v_{k_r}^{(r)} \end{bmatrix}.
\]
Therefore, problems~\eqref{eq:prob1} and~\eqref{eq:prob2} can be equivalently reformulated as a two-level optimization problem as follows.

{\it The inner optimization}:~For a fixed configuration with $m \ge r \ge p$, and a particular partition of $m$, say ${\Gamma}_k=(k_1,k_2,\ldots,k_r)$ satisfying $k_1 \ge k_2 \ge \dots \ge k_r$ and $\sum_{i=1}^r k_i = m$, the inner optimization problem can be stated as follows:
\begin{equation}
\label{INNER}
\begin{aligned}
\mathcal{F}(r,\Gamma_k)=\min_{y_g} \quad & \mathbf{R}(z_g) \\[2pt] 
\text{s.t.} \quad & f(y_g) = 0, \\[2pt] 
& {\|v_1^{(i)}\|}_2^2 = 1, \quad i = 1,2,\dots,r, \\[2pt] 
& v_1^{(i)\top} v_1^{(j)} = 0, \quad i \neq j. \\[2pt] 
\end{aligned}
\end{equation}

{\it The outer optimization}:~By evaluating the minimum cost $\mathcal{F}(r,\Gamma_k)$ obtained from the inner optimization~\eqref{INNER}, we solve the outer problem which identifies the overall optimal configuration by searching over all feasible  $m \ge r \ge p$ and all admissible integer partitions $\Gamma_k=(k_1,k_2,\ldots,k_r)$ satisfying $k_1 \ge k_2 \ge \dots \ge k_r$ and $\sum_{i=1}^r k_i = m$. The resulting outer optimization problem is formulated as
\begin{equation}
\label{OUTER}
\begin{aligned}
\min_{r, \Gamma_k} \quad & \mathcal{F}(r, \Gamma_k) \\[2pt]
\text{s.t.} \quad & p \le r \le m, \\[2pt]
& \sum_{i=1}^r k_i = m, \\[2pt]
& k_1 \ge k_2 \ge \dots \ge k_r \ge 1.
\end{aligned}
\end{equation}
The overall problem can then be presented as a comprehensive nested optimization problem given as:
\begin{equation}
\label{eq:bilevel_optimization_final}
\begin{aligned}
& \min_{r, \Gamma_k} \quad
  \underbrace{
  \left\{
  \begin{aligned}
    \min_{y_g} \quad & \mathbf{R}(z_g) \\[2pt]
    \text{s.t.} \quad & f(y_g) = 0, \\[2pt] 
    & \|v_1^{(i)}\|_2^2 = 1, \quad i = 1,2, \dots, r, \\[2pt]
    & v_1^{(i)\top} v_1^{(j)} = 0, \quad i \ne j.
  \end{aligned}
  \right\}
  }_{\text{INNER Optimization}} \\[12pt]
& \underbrace{
  \text{s.t.} \quad p \le r \le m, \quad \sum_{i=1}^r k_i = m, \quad k_1 \ge k_2 \ge \dots \ge k_r \ge 1.
  }_{\text{OUTER Optimization}}
\end{aligned}
\end{equation}
Suppose that the optimal solution of~\eqref{eq:bilevel_optimization_final} is obtained as
\begin{equation}
\label{eq:y_g_cases}
y_1^\star = 
\begin{bmatrix}
v_1^{(1)\star} \\
v_2^{(1)\star}\\ 
\vdots \\
v_{k_r}^{(r)\star}\\[3pt]
z_1^\star \\
\mu^\star
\end{bmatrix} \quad (\text{for } g=1),
\qquad \text{and} \qquad
y_2^\star = 
\begin{bmatrix}
v_1^{(1)\star} \\
v_2^{(1)\star}\\ 
\vdots \\
v_{k_r}^{(r)\star}\\[3pt]
z_2^\star 
\end{bmatrix} \quad (\text{for } g=2).
\end{equation}
By reconstructing the parameter vector $z_g^\star$ via $\operatorname{vec}(\Delta^\star_g) = D z_g^\star$ into its respective matrix $\Delta_g^\star \in \mathbb{R}^{n \times n}$, the updated matrices can be given as
\begin{equation}
\label{eq:optimal_recovery}
    Y^\star_\mu = X + \Delta^\star_1 \quad (\text{for }g = 1), \qquad
Y^\star_\lambda = X + \Delta^\star_2 \quad (\text{for }g = 2).
\end{equation}
Similarly, after extracting the vector components $v_j^{(i)\star}$ from $y_1^\star$ and $y_2^\star$, we obtain the vectors associated with the target eigenvalue - either the optimized $\mu^\star$ (for $g=1$) or the fixed $\lambda$ (for $g=2$). By construction, the updated matrix $Y^\star$ successfully guarantees the desired lower bounds on the geometric and algebraic multiplicities for both cases
\begin{equation}
\label{eq:multiplicity_bounds1}
\begin{aligned}
\mathrm{GM}(\mu^\star;Y^\star_\mu) \ge p, \qquad \mathrm{AM}(\mu^\star;Y^\star_\mu) \ge m \qquad &(\text{for } g=1)
\end{aligned}
\end{equation}
and
\begin{equation}
\label{eq:multiplicity_bounds2}
\begin{aligned}
\mathrm{GM}(\lambda;Y^\star_\lambda) \ge p, \qquad \mathrm{AM}(\lambda;Y^\star_\lambda) \ge m \qquad &(\text{for } g=2).
\end{aligned}
\end{equation}

\subsection{Nature of the optimization problem}
\label{subsec:optimization_topology}

The inner optimization problem in \eqref{eq:bilevel_optimization_final} possesses a convex quadratic objective function of the form
\begin{equation}
\mathbf{R}(z_g)=\|Dz_g\|_2^2
=
z_g^\top(D^\top D)z_g,
\end{equation}
where the structural matrix $D$ has full column rank. Consequently, $D^\top D$ is symmetric positive definite, implying that $\mathbf{R}(z_g)$ is convex with respect to the structural parameters $z_g$. However, the complete optimization problem is formulated over the variable $y_g$, which additionally contains the spectral variables associated with the eigenvalue and generalized eigenvectors. 
The nonlinear feasibility condition
\[
f(y_g)=0
\]
introduces bilinear coupling between the perturbation parameters and spectral variables through the Jordan chain relations. Furthermore, the normalization and orthogonality constraints
\[
{\|v_1^{(i)}\|}_2^2=1
\quad \text{and}\quad
v_1^{(i)\top}v_1^{(j)}=0
\]
are quadratic equality constraints  that define a non-convex feasible set.

Therefore, although the objective function is convex with respect to the structural parameters $z_g$, the overall constrained optimization problem in the augmented variable $y_g$ is inherently non-convex and may admit multiple local minima.

\subsection{Numerical solution of the nested optimization problem}
\label{remark:numerical_solution}

The nested optimization problem presented in \eqref{eq:bilevel_optimization_final} features a two-level structure comprising a continuous nonlinear inner problem and a discrete combinatorial outer problem \cite{dempe2002foundations}. These two problems can be solved using distinct numerical procedures.

\subsubsection{The inner optimization}\label{subsec:inner} 
To solve the inner continuous optimization problem numerically, we use MATLAB's \texttt{fmincon} function. Specifically, we employ the Sequential Quadratic Programming (SQP) algorithm \cite{nocedal2006numerical}. SQP is well-suited for handling the nonlinear equality constraints associated with the structured feasibility system and ensuring convergence to a local minimum.

\subsubsection{The outer optimization}\label{subsec:outer} 
Among all the local optimal solutions yielded by the inner optimization for each valid configuration, the outer problem identifies the overall optimum by comparing their minimized objective values. The outer problem defined in \eqref{OUTER} is a discrete integer programming problem. The equality $\sum_{i=1}^r k_i = m$ and the ordering $k_1 \ge k_2 \ge \dots \ge k_r \ge 1$ formally define the problem of finding the integer partitions of $m$ into $r$ parts \cite{orucc2016number}, bounded by $m \ge r \ge p$. In our context, each partition $\Gamma_k=(k_1,k_2,\ldots,k_r)$ represents a candidate Jordan structure consisting of $r$ chains of lengths $k_i$. 
For each admissible $r\in\{p,p+1,\ldots,m\}$, we computationally generate these partitions using a recursive procedure. At first, $k_1$ is selected from the set $\{1,2,\ldots,m-r+1\},$ after which the remainder $m-k_1$ is recursively partitioned into $r-1$ positive integers. Each generated partition is subsequently sorted in nonincreasing order and duplicates are removed. Repeating this procedure for all bounded values of $r$ produces the complete, finite collection of admissible configurations considered by the outer optimization. This procedure is given in Algorithm~\ref{alg:partition_generation}.
\begin{algorithm}[H]
\caption{Generation of admissible configurations}
\label{alg:partition_generation}
\begin{algorithmic}[1]

\REQUIRE \ Positive integers $m,p$ and $r$ with $p\le r\le m$

\ENSURE All distinct partitions $(k_1, k_2,\ldots,k_r)$ satisfying
$k_1\ge k_2\ge\cdots\ge k_r\ge1$ and
$k_1+k_2+\cdots+k_r=m$

\STATE Initialize $\Gamma \gets\emptyset$

\STATE Recursively generate all compositions of $m$ into exactly $r$ positive integers (using steps 3-9)

\FOR{$k_1=1,\ldots,m-r+1$}

    \STATE Recursively generate all compositions $(k_2,k_3,\ldots,k_r)$ of $m-k_1$ into $r-1$ positive integers (using step 2)

    \FORALL{generated tuples $(k_1,k_2,\ldots,k_r)$}

        \STATE Sort $(k_1,k_2,\ldots,k_r)$ into nonincreasing order
        $(\hat{k}_1,\hat{k}_2,\ldots,\hat{k}_r)$ with
        $\hat{k}_1\ge\hat{k}_2\ge\cdots\ge\hat{k}_r\ge1$

        \STATE Insert $(\hat{k}_1,\hat{k}_2,\ldots,\hat{k}_r)$ into $\Gamma$

    \ENDFOR

\ENDFOR

\STATE Remove duplicate elements from $\Gamma$

\RETURN $\Gamma$

\end{algorithmic}
\end{algorithm}

For example, when $m=5$ and $r=3$, the only admissible partitions are
\[
(3,1,1)
\qquad \text{and} \qquad
(2,2,1),
\]
which correspond exactly to the two possible Jordan chain configurations.

Let $\Gamma_{m,r}$ denote the set of all partitions of $m$ into exactly $r$ positive integers. For fixed $m$ and $r$, the number of admissible configurations is exactly the number of partitions of $m$ into $r$ positive integers, say $s_r(m)$. That is, $|\Gamma_{m,r}|=s_r(m)$. When $r\in\{p,p+1,\ldots,m\}$, the total number of configurations the outer loop explores is:
\begin{equation}
    N=\sum_{r=p}^{m} s_r(m).
\end{equation}
Since
\(\sum_{r=1}^{m} s_r(m)=p(m),\)
where $p(m)$ denotes the ordinary partition function, it follows that
\(N \le p(m).\)
Therefore, the total search space is bounded by the number of partitions of $m$. Using the Hardy--Ramanujan asymptotic formula \cite{orucc2016number},
\(p(m)\sim
\frac{1}{4m\sqrt{3}}
e^{\pi\sqrt{\frac{2m}{3}}},\)
the search space grows only as
\(p(m)=e^{O(\sqrt{m})}.
\)
Hence, the number of admissible configurations grows subexponentially with respect to $m$, which guarantees that this exhaustive search remains computationally tractable for the problem sizes considered in this work.
Additionally, for each fixed $r<m$, the sharper upper bound of Oru\c{c}~\cite{orucc2016number} yields
\[
s_r(m)
=
p(m,r)
\le
\frac{5.44}{m-r}
e^{\pi\sqrt{\frac{2(m-r)}{3}}}.
\]

The overall numerical algorithm thus operates by iteratively solving continuous quadratic subproblems via SQP (the inner loop) across the exhaustively enumerated set of integer partitions (the outer loop). As a result, we obtain an approximate local solution to the nearest matrix problem~\eqref{eq:prob1} or~\eqref{eq:prob2}.


\subsubsection{Algorithm and Initialization Strategy}
We summarized the procedure explained in subsections~\ref{subsec:inner} and~\ref{subsec:outer} to solve the optimization problem~\eqref{eq:bilevel_optimization_final} in Algorithm~\ref{alg:nearest_structured_AM_GM}. Since the inner optimization problem is nonconvex due to the nonlinear eigenvalue constraints and the coupling among the variables $(v_j^{(i)}, z,\mu)$, the solution obtained by the SQP algorithm is sensitive to the initial guess \cite{nocedal2006numerical}. To reduce this dependence on the initial guess, we employ the following initialization strategy.

The spectral variables, namely the unknown eigenvalue $\mu$ and the generalized eigenvectors ${v_j^{(i)}}$, are initialized randomly using samples drawn from the standard normal distribution $\mathcal N(0,1)$. In contrast, the structural parameters are initialized at the origin, that is, $z^{(0)}=\mathbf{0}\in\mathbb{R}^l$. As a result, the initial perturbation satisfies $\Delta=\mathbf{0}$, and the optimization starts from the original matrix $X$.

For each admissible structural configuration, the algorithm is run 20 times with different random initializations of the spectral variables. Among all feasible solutions obtained, we select the one with the smallest perturbation norm for that configuration. Finally, among all configurations considered, the solution with the smallest computed perturbation norm ${\|\Delta\|}_F$ is chosen as the final computed solution.
\begin{algorithm}[h!]
\caption{Nearest matrix enforcing $\mathrm{GM}(\mu;Y)\ge p$ and $\mathrm{AM}(\mu;Y)\ge m$. }
\label{alg:nearest_structured_AM_GM}

\begin{algorithmic}[1]

\STATE \textbf{Input:} 
$X\in\mathbb R^{n\times n}$, transformation matrix $D\in\mathbb R^{n^2\times l}$, integers $1\le p\le m\le n$.

\FOR{$r=p,p+1,\ldots,m$}

    \STATE Generate all integer partitions
    $\Gamma=(k_1,k_2,\ldots,k_r)$ satisfying
    \[
    \sum_{i=1}^{r} k_i = m,
    \qquad
    k_1 \ge k_2 \ge \cdots \ge k_r \ge 1.
    \]
\FOR{each admissible configuration $(r,\Gamma)$}
        
        \FOR{$t = 1$ \TO $20$ (Multi-start trials)}

            \STATE \textbf{Decision variables:} 
            $z\in\mathbb R^l$, scalar $\mu$, and vectors $v_j^{(i)}\in\mathbb R^n$ for $i=1,2,\dots,r$ and $j=1,2,\dots,k_i$.
            
            \STATE \textbf{Initialize:} Set structural parameters $z^{(0)} = \mathbf{0}$. Initialize $\mu^{(0)}$ and $\{v_j^{(i)}\}^{(0)}$ via standard normal distribution $\mathcal N(0,1)$.

            \STATE Solve \eqref{eq:bilevel_optimization_final} using the SQP implementation of MATLAB's \texttt{fmincon} solver. 
            \STATE Extract the optimal structural parameter vector $z_t^\star$ and the corresponding objective function value $\mathbf{R}_t^\star$.
        \ENDFOR
        
        \STATE Determine the minimal objective over the 20 trials for the current configuration $(r, \Gamma)$:
        \[
        \mathbf{R}_{(r,\Gamma)}^\star = \min_{1 \le t \le 20} \mathbf{R}_t^\star
        \]
        \STATE Let $z_{(r,\Gamma)}^\star$ be the parameter vector corresponding to $\mathbf{R}_{(r,\Gamma)}^\star$.
    \ENDFOR
\ENDFOR

\STATE Extract the minimum objective over all evaluated multiplicities $r$ and partitions $\Gamma$:
\[
\mathbf{R}^\star = \min_{r, \Gamma} \mathbf{R}_{(r,\Gamma)}^\star
\]
\STATE Let $z^\star$ be the optimal structural parameter vector corresponding to $\mathbf{R}^\star$.

\STATE Construct the optimal structured perturbation matrix $\Delta^\star = \operatorname{mat}(Dz^\star)$.
\STATE Set $Y^\star=X+\Delta^\star$.

\STATE \textbf{Output:} $\Delta^\star$, $Y^\star$, and $\mu^\star$.

\end{algorithmic}
\end{algorithm}

\begin{remark}
\label{rem:known_eigenvalue}
Algorithm~\ref{alg:nearest_structured_AM_GM} is formulated for the unknown-eigenvalue case (problem~\eqref{eq:prob1}), where the target eigenvalue $\mu$ is treated as an optimization variable and is computed together with the perturbation and generalized eigenvectors. 
For the known-eigenvalue case (problem~\eqref{eq:prob2}), the prescribed eigenvalue $\lambda$ is fixed in advance and therefore does not need to be optimized. In this setting, $\mu$ is simply replaced by $\lambda$ in all Jordan chain constraints, and the scalar variable $\mu$ is removed from both the decision-variable vector and the initialization procedure. Apart from this modification, the overall algorithm remains unchanged. 
\end{remark}

\section{Numerical Experiments}
\label{sec numerical}

In this section, we present numerical experiments to demonstrate the effectiveness of the proposed optimization framework. 
%

All computations were carried out in \textsc{MATLAB} [R2024a] version using the \texttt{fmincon} solver 
with the Sequential Quadratic Programming (SQP) algorithm to solve the inner optimization problem~\eqref{eq:bilevel_optimization_final}. The simulations were performed on a standard desktop computing environment equipped with an Intel Core i3 processor and 8 GB of RAM.

\begin{example}[Nearest Toeplitz matrix: known vs. unknown eigenvalue]
\label{ex:toeplitz_cases}
Consider the following $6 \times 6$ Toeplitz matrix
\[
X =
\begin{bmatrix}
 1.8 & -0.7 &  0.4 & -0.2 &  0.1 &  0.0 \\
-0.5 &  1.8 & -0.7 &  0.4 & -0.2 &  0.1 \\
 0.3 & -0.5 &  1.8 & -0.7 &  0.4 & -0.2 \\
-0.1 &  0.3 & -0.5 &  1.8 & -0.7 &  0.4 \\
 0.0 & -0.1 &  0.3 & -0.5 &  1.8 & -0.7 \\
 0.2 &  0.0 & -0.1 &  0.3 & -0.5 &  1.8
\end{bmatrix}
\]
with eigenvalues    $ 3.4538 + 0.0000i$,
  $2.4664 + 0.0000i$,
   $1.0835 + 0.0673i$,
   $1.0835 - 0.0673i$, 
   $1.3564 + 0.0716i$, and 
   $1.3564 - 0.0716i$.
There exists a transformation matrix $D \in \mathbb R^{n^2 \times (2n-1)}$, where $n=6$ such that any Toeplitz matrix $\Delta \in \mathbb R^{6 \times 6}$ satisfies $\operatorname{vec}(\Delta) = Dz$, $z \in \mathbb R^{11}.$

\medskip
\noindent\textbf{ Unknown eigenvalue (Problem~\eqref{eq:prob1} with $p=2$ and $m=3$)}

We use Algorithm~\ref{alg:nearest_structured_AM_GM} to find the nearest Toeplitz matrix $Y_\mu = X + \Delta_1$ such that $Y_\mu$ admits an eigenvalue $\mu \in \mathbb R$ satisfying
\[
\mathrm{GM}(\mu;Y_\mu) \ge 2 \quad \text{and}\quad \mathrm{AM}(\mu;Y_\mu) \ge 3.
\label{eq:toeplitz_mult_unknown}
\]
Here, $\mu$ is treated as an additional variable. 
The algorithm terminated with $\texttt{exitflag} = 1$, indicating its convergence to a stationary point.
%
%
The optimal Toeplitz matrix $\hat Y_{\mu} = X + \hat \Delta_1$ is obtained as
\[
\hat Y_{\mu} =
\begin{bmatrix}
 1.8003 & -0.6083 &  0.5027 & -0.2810 & -0.0287 &  0.3765 \\
-0.4760 &  1.8003 & -0.6083 &  0.5027 & -0.2810 & -0.0287 \\
 0.2893 & -0.4760 &  1.8003 & -0.6083 &  0.5027 & -0.2810 \\
-0.0747 &  0.2893 & -0.4760 &  1.8003 & -0.6083 &  0.5027 \\
-0.1264 & -0.0747 &  0.2893 & -0.4760 &  1.8003 & -0.6083 \\
 0.2802 & -0.1264 & -0.0747 &  0.2893 & -0.4760 &  1.8003
\end{bmatrix}
\]
with an eigenvalue $\mu^\star = 1.2067$. 
The optimal perturbation satisfies, $\|\hat Y_{\mu} - X\|_F=0.5679$
such that $\mu^\star = 1.2067$ is an eigenvalue of $\hat Y_{\mu}$ with
$\mathrm{AM}(\mu^\star;\hat Y_\mu) = 4$ and $\mathrm{GM}(\mu^\star;\hat Y_\mu) =4$.


\medskip
\noindent\textbf{Known eigenvalue (Problem~\eqref{eq:prob2} with $p=2$ and $m=3$)}

Now, suppose the target eigenvalue is prescribed in advance. We take the target eigenvalue as $\lambda = 1.8$. 
We apply Algorithm~\ref{alg:nearest_structured_AM_GM} with $\lambda$ prescribed as $\lambda=1.8$ to find the nearest Toeplitz perturbation $Y_\lambda = X + \Delta_2$ satisfying
\[
\mathrm{GM}(\lambda;Y_\lambda) \ge 2 \quad \text{and} \quad \mathrm{AM}(\lambda;Y_\lambda) \ge 3.
\label{eq:toeplitz_mult_known}
\]
By fixing $\lambda$, the decision variable space is reduced, and the problem restricts the feasible set to $\mathcal{V}_\lambda$ defined in~\eqref{U Set}. 
The algorithm terminated with $\texttt{exitflag} = 1$.

Since the eigenvalue is constrained to a specific non-optimal location relative to the unknown eigenvalue case, the required perturbation norm strictly increases as $\|\hat Y_\lambda - X\|_F = 1.2671$. The optimal 
Toeplitz matrix $\hat Y_{\lambda} = X + \hat \Delta_2$ is obtained as
\[
\hat Y_{\lambda} =
\begin{bmatrix}
 1.9746 & -0.4025 &  0.2627 & -0.4592 &  0.2597 &  0.0697 \\
-0.3471 &  1.9746 & -0.4025 &  0.2627 & -0.4592 &  0.2597 \\
 0.0986 & -0.3471 &  1.9746 & -0.4025 &  0.2627 & -0.4592 \\
-0.3088 &  0.0986 & -0.3471 &  1.9746 & -0.4025 &  0.2627 \\
 0.0341 & -0.3088 &  0.0986 & -0.3471 &  1.9746 & -0.4025 \\
-0.2849 &  0.0341 & -0.3088 &  0.0986 & -0.3471 &  1.9746
\end{bmatrix}.
\]
%
As in the unknown eigenvalue case, direct eigenvalue computation and subsequent verification confirm that $\mathrm{AM}(\lambda;\hat Y_\lambda) = 4$ and $\mathrm{GM}(\lambda;\hat Y_\lambda) = 3$, successfully enforcing the required multiplicity bounds at the prescribed location of $\lambda = 1.8000$ while preserving the structure of the matrix. 

In this case, the optimal matrix $\hat Y_\lambda$ is a defective matrix. This implies that the distance
$\|\hat Y_\lambda - X\|_F = 1.2671$ gives an upper bound to the structured distance to a defective matrix for $X$. To improve the estimation for the distance to the nearest structured defective matrix for $X$, we compute the nearest matrix in Problem~\eqref{eq:prob2} for the Toeplitz matrix $X$ with the prescribed eigenvalue $\lambda=1.8$, while varying the multiplicity bounds $m$ and $p$. The results are depicted in Table~\ref{tab:multiplicity_bounds}. The third column records the norm of the optimal perturbation achieved by Algorithm~\ref{alg:nearest_structured_AM_GM} for the multiplicity bounds $m$ (first column) and $p$ (second column). The forth and fifth columns respectively record the $\mathrm{AM}(\lambda;\hat Y_\lambda)$ and $\mathrm{GM}(\lambda;\hat Y_\lambda)$. Thus the minimum of the third column, i.e. ${\|\hat \Delta\|}_F=0.5774$ gives a good estimation to the structured distance to defectivity for $X$. 



\begin{table}[ht]
\centering
\small
\setlength{\tabcolsep}{4pt}
\begin{tabular}{ccccc}
\hline
 \textbf{Target $m$} & \textbf{Target $p$} & $\mathbf{||\hat \Delta||_F}$ & \textbf{Final AM} & \textbf{Final GM} \\
\hline
 2 & 1 & 0.6839 & 2 & 1 \\
 2 & 2 & 0.7958 & 2 & 2 \\
 3 & 1 & 0.5774 & 3 & 1 \\
 3 & 2 & 1.2671 & 4 & 3 \\
 3 & 3 & 1.4970 & 5 & 5 \\
 4 & 1 & 2.0322 & 6 & 2 \\
 4 & 2 & 2.0322 & 6 & 2 \\
 4 & 3 & 2.2159 & 6 & 6 \\
 4 & 4 & 2.2159 & 6 & 6 \\
\hline
\end{tabular}
\caption{Solution to Problem~\eqref{eq:prob2} with varying multiplicity bounds for the Toeplitz matrix $X$ in Example~\ref{ex:toeplitz_cases}}
\label{tab:multiplicity_bounds}
\end{table}

The left plot in Figure~\ref{fig:eigenvalue_comparison} shows the unperturbed eigenvalues of the matrix $X$. The right plot traces the movement of the eigenvalues (surrounded by circles) of $X$ with respect to perturbation $X+t \hat \Delta_2$ as $t$ moves from $0$ to $1$, $\hat \Delta_2$ being the minimal Toeplitz perturbation such that $\hat Y_\lambda=X+\hat \Delta_2$ has the prescribed eigenvalue $\lambda=1.8$ (surrounded by a diamond) with $\mathrm{AM}(\lambda;\hat Y_\lambda) = 4$ and $\mathrm{GM}(\lambda;\hat Y_\lambda) = 3$. The eigenvalue curves originated from the unperturbed eigenvalues $2.4664 + 0.0000i$, $1.0835 + 0.0673i$,  $1.3564 + 0.0716i$, and $1.3564 - 0.0716i$ meet to make the target eigenvalue $\lambda =1.8$, an eigenvalue of $X+\hat \Delta$ with algebraic multiplicity $4$.

\begin{figure}[ht]
\centering
\begin{subfigure}{0.50\textwidth}
    \centering
    \includegraphics[width=\linewidth]{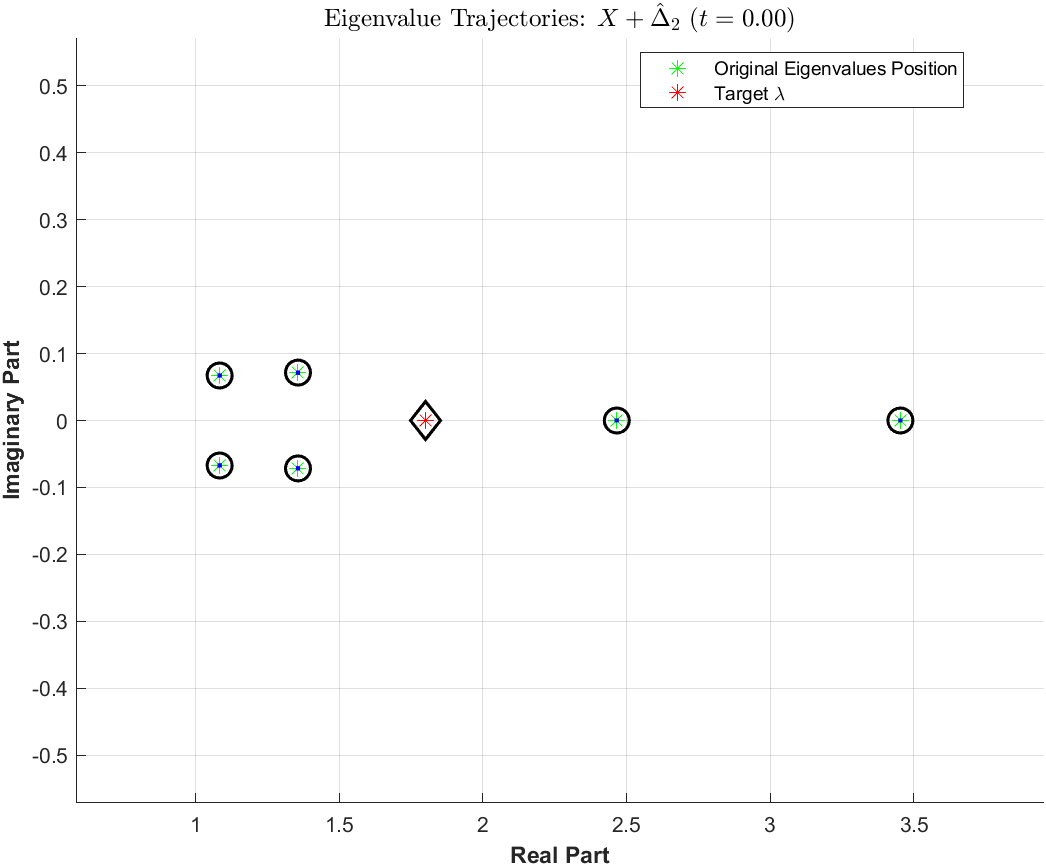}
    \caption{$t = 0.00$}
    \label{fig:T0}
\end{subfigure}\hfill
\begin{subfigure}{0.50\textwidth}
    \centering
     \includegraphics[width=\linewidth]{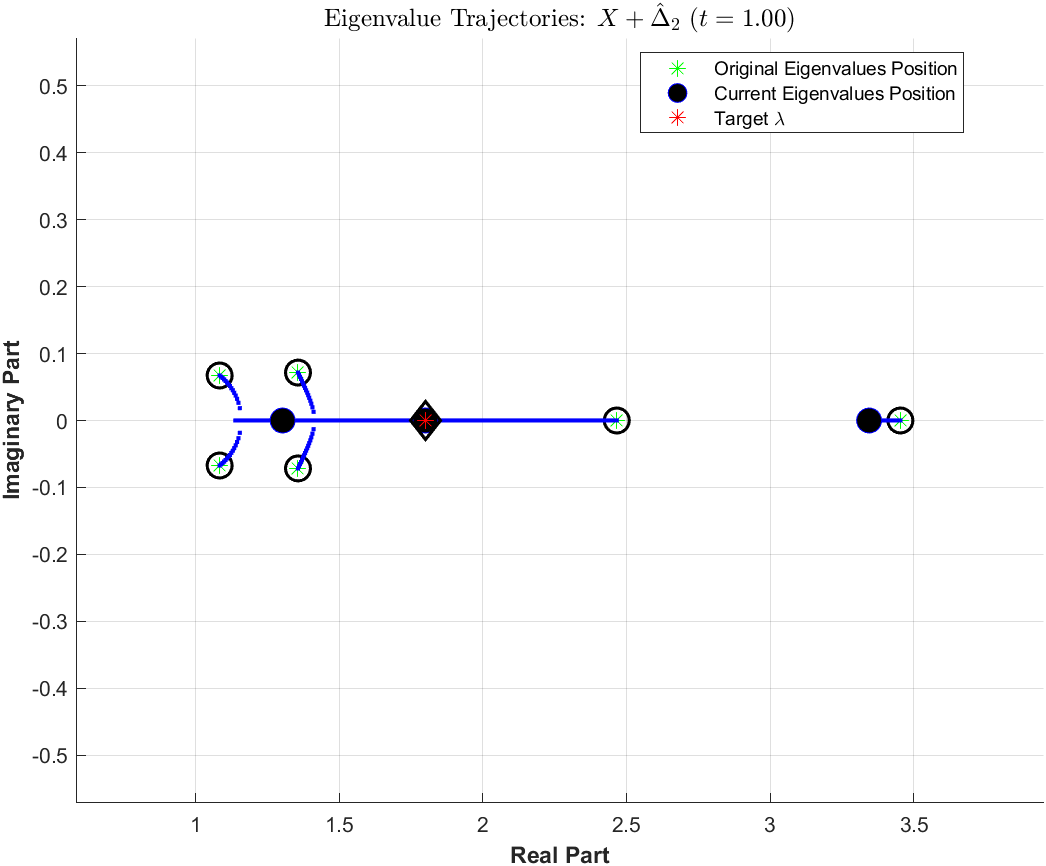}
    \caption{$t = 1.00$}
    \label{fig:T1}
\end{subfigure}
\caption{Eigenvalue perturbation curves for the matrix $Y(t) = X + t \hat\Delta_2$ in Example~\ref{ex:toeplitz_cases}. The left panel (a) shows the original eigenvalues before the perturbation ($t=0$). The right panel (b) shows the eigenvalue curves (dotted blue lines) and the final eigenvalues at $t=1$. Multiple eigenvalue curves meet exactly at the target $\lambda$ (surrounded by a diamond).}
\label{fig:eigenvalue_comparison}
\end{figure}

\end{example}

\begin{example}[Nearest Hankel matrix: known vs. unknown eigenvalue]
\label{ex:hankel_cases}

Consider the following $5 \times 5$ Hankel matrix:
$$
X=
\begin{bmatrix}
 1.2 & -0.4 &  0.7 &  1.1 & -0.3\\
-0.4 &  0.7 &  1.1 & -0.3 &  0.9\\
 0.7 &  1.1 & -0.3 &  0.9 & -1.5\\
 1.1 & -0.3 &  0.9 & -1.5 &  0.8\\
-0.3 &  0.9 & -1.5 &  0.8 &  0.6
\end{bmatrix}
$$
with five distinct real eigenvalues. Then there exists the transformation matrix $D \in \mathbb R^{25 \times 9}$ such that any Hankel matrix $\Delta \in \mathbb R^{5\times 5}$ satisfies
 $\operatorname{vec}(\Delta)=Dz$, $z\in\mathbb R^{9}$.
Since any real Hankel matrix is symmetric, it is guaranteed to be diagonalizable. Therefore, defective eigenvalues cannot occur, and the geometric multiplicity must exactly equal the algebraic multiplicity for each eigenvalue of $X$.


\medskip
\noindent\textbf{Unknown eigenvalue (Problem~\eqref{eq:prob1} with $p=2$ and $m=3$)}

We use Algorithm~\ref{alg:nearest_structured_AM_GM} to find the nearest Hankel perturbation $Y_\mu = X + \Delta_1$ such that $Y_\mu$ admits an unknown eigenvalue $\mu \in \mathbb R$ satisfying
\[\mathrm{GM}(\mu;Y_\mu) \ge 2 \quad \text{and} \quad  \mathrm{AM}(\mu;Y_\mu) \ge 3.
\label{eq:hankel_mult_unknown}\]
The optimization problem was solved using the proposed nested approach as given in \eqref{eq:bilevel_optimization_final}, treating $\mu$ as an additional variable. The algorithm terminated with $\texttt{exitflag} = 1$, indicating convergence to a stationary point.


The optimal Hankel matrix $\hat Y_{\mu} = X + \hat \Delta_1$ is obtained as
$$
\hat Y_{\mu}=
\begin{bmatrix}
 0.5230 & -0.6780 &  0.3965 &  1.2827 & -0.0656 \\
-0.6780 &  0.3965 &  1.2827 & -0.0656 &  0.8119 \\
 0.3965 &  1.2827 & -0.0656 &  0.8119 & -1.2025 \\
 1.2827 & -0.0656 &  0.8119 & -1.2025 &  1.1737 \\
-0.0656 &  0.8119 & -1.2025 &  1.1737 &  0.5227
\end{bmatrix}
$$
with an eigenvalue $\mu^\star = 1.5542$. The optimal perturbation satisfies $\|\hat Y_\mu- X\|_F = 1.3709$ such that $\mu^\star = 1.5542$ is an eigenvalue of $\hat Y_\mu$ with 
$\mathrm{AM}(\mu^\star;\hat Y_\mu) = \mathrm{GM}(\mu^\star;\hat Y_\mu) = 3$, successfully satisfying the lower bounds $m=3$ and $p=2$ while preserving the structure of the matrix.

\medskip
\noindent\textbf{Known eigenvalue (Problem~\eqref{eq:prob2} with $p=2$ and $m=3$)}

Now, suppose the target eigenvalue is prescribed in advance as $\lambda = 0.47$. We again apply Algorithm~\ref{alg:nearest_structured_AM_GM} with prescribed $\lambda=0.47$ to find the nearest Hankel perturbation $Y_\lambda = X + \Delta_2$ satisfying
\[
\mathrm{GM}(\lambda;Y_\lambda) \ge 2 \quad \text{and} \quad  \mathrm{AM}(\lambda;Y_\lambda) \ge 3.
\]
The optimization problem was solved using the proposed nested approach as given in \eqref{eq:bilevel_optimization_final}. The solver terminated with $\texttt{exitflag} = 1$.

Since the eigenvalue is constrained to a non-optimal location relative to the unconstrained minimum ($\mu^\star = 1.5542$ from unknown eigenvalue case), the required perturbation size strictly increases as $\|\hat Y_\lambda - X\|_F = 2.7811$. 
The optimal Hankel matrix $\hat Y_{\lambda} = X + \hat \Delta_2$ is obtained as
$$
\hat Y_{\lambda}=
\begin{bmatrix}
 0.1134 & -0.1134 & -0.0127 &  0.5437 & -0.2021 \\
-0.1134 & -0.0127 &  0.5437 & -0.2021 &  0.4793 \\
-0.0127 &  0.5437 & -0.2021 &  0.4793 & -0.6737 \\
 0.5437 & -0.2021 &  0.4793 & -0.6737 &  0.7644 \\
-0.2021 &  0.4793 & -0.6737 &  0.7644 & -0.3059
\end{bmatrix}.
$$
As in the unknown eigenvalue case, direct eigenvalue computation shows three identical eigenvalues at $\lambda = 0.4700$, successfully satisfying the lower bounds of $m=3$ and $p=2$ while preserving the Hankel structure of the matrix.

\end{example}

\begin{example}
\label{ex:comparison_literature}{(Comparison with Mengi \cite{mengi2011locating})}

In this example, we compare our proposed method with the spectral norm (2-norm) optimization methods developed by Armentia et al.~\cite{armentia2020nearest} and Mengi~\cite{mengi2011locating}.  Their techniques use generalized Malyshev-type formulas to determine the distance by minimizing the supremum of a specific singular value within an augmented block matrix. 
Our approach diverges from these methods in two primary ways. First, their techniques search strictly within the unconstrained space $\mathbb{R}^{n \times n}$, meaning they cannot explicitly preserve physical matrix structures. Second, their objective function minimizes the spectral norm, whereas ours uses the Frobenius norm.
Specifically, such approaches characterize the Wilkinson distance to defectivity via
\[
W_2(X) =
\inf_{\lambda \in \mathbb{R}}
\sup_{\gamma \in (0,1]}
\sigma_{2n-1}
\left(
\begin{bmatrix}
X - \lambda I & \gamma I \\
0 & X - \lambda I
\end{bmatrix}
\right),
\]
which represents the smallest spectral-norm perturbation required to render the matrix $X$ defective.

Consider the non-symmetric upper Hessenberg matrix $X$~\cite{mengi2011locating}:
$$
X = \begin{bmatrix}
 3 & -2 &  1 &  4 \\
-1 & -3 &  1 &  1 \\
 0 & -4 &  2 &  1 \\
 0 &  0 &  5 &  1
\end{bmatrix}.
$$
We seek the nearest matrix $Y = X + \Delta$ possessing an eigenvalue $\mu$ satisfying
$$
\mathrm{AM}(\mu;Y) \ge 2 \quad \text{and} \quad \mathrm{GM}(\mu;Y) \ge 1.
$$

\medskip

\noindent
\textbf{Case 1: Singular value minimization~(\cite{mengi2011locating})}

Applying the singular value optimization framework in~\cite{mengi2011locating} yields an optimal perturbation $\Delta_{SVD}$ such that $Y_{SVD} = X + \Delta_{SVD}$ admits an eigenvalue $\mu^\star = 1.5181$ with $\mathrm{AM}(\mu^*;Y_{SVD}) = 2$ and $\mathrm{GM}(\mu^*;Y_{SVD}) = 1$. The resulting matrix $Y_{\mathrm{SVD}}$ reported in \cite{mengi2011locating} is
$$
Y_{SVD} = \begin{bmatrix}
 3.0082 & -2.0306 &  1.0215 &  3.9132 \\
-0.5849 & -3.0152 &  0.9122 &  1.0544 \\
-0.3014 & -3.8648 &  1.9683 &  1.3209 \\
 0.1332 &  0.0947 &  4.8953 &  1.3066
\end{bmatrix}.
$$
%
The spectral norm of $\Delta_{SVD}$ is exactly the Wilkinson distance 
$\|\Delta_{SVD}\|_2 = 0.5556$. The Frobenius norm of $\Delta_{SVD}$ is evaluated as $\|\Delta_{SVD}\|_F = 0.7330$.

Although $Y_{SVD}$ mathematically satisfies the prescribed multiplicity bounds for the optimized eigenvalue $\mu^\star = 1.5181$, the perturbation is computed over the full unconstrained matrix space $\mathbb{R}^{4 \times 4}$ and fails to preserve the Hessenberg structure (zeros below the first subdiagonal) of $X$. 

\medskip

\noindent
\textbf{Case 2: Proposed Optimization Framework (POF) }

To demonstrate the flexibility of the proposed method, we 
solve Problem~\eqref{eq:prob2} (known eigenvalue) and Problem~\eqref{eq:prob1} (unknown eigenvalue) for matrix $X$ with $m=2$ and $p=1$. For each problem, we consider both an unconstrained search space (to provide a direct comparison with Case 1) and a structured search space (preserving the Hessenberg structure of $X$). We solved all these problems using the proposed nested optimization approach as given in \eqref{eq:bilevel_optimization_final}.

\medskip
\noindent
\textit{Sub-case 2.1:~Known eigenvalue (Problem~\eqref{eq:prob2} with $\lambda= 1.5181$)}

We fix the target eigenvalue to $\lambda = 1.5181$, identically matching the classical optimum found in Case 1.

\textbf{(a) Unstructured search space:} We define the transformation matrix as the identity matrix $D = I_{16}$, parameterizing the full space $\operatorname{vec}(\Delta) = I_{16}z$. 
This results in the perturbed matrix $Y_{\text{POF}}^{(1)} = X + \Delta_1$ given by
$$
Y_{\text{POF}}^{(1)} = \begin{bmatrix}
3.0444 & -2.0113 & 0.9981 & 3.9777 \\
-0.7001 & -3.0765 & 0.9869 & 0.8492 \\
-0.3756 & -3.9043 & 2.0164 & 1.1888 \\
-0.0303 & 0.0077 & 5.0013 & 1.0152
\end{bmatrix}.
$$
The Frobenius norm is significantly reduced compared to the SVD approach, yielding 
$$
{\|Y_{\text{POF}}^{(1)} - X\|}_F =  \|\Delta_1\|_F = 0.5556.
$$ 
Note that the spectral norm of $\Delta_1$ is $\|\Delta_1\|_2 = 0.5556$, which is identical to the optimal distance achieved in Case-1. 

The equality of the Frobenius and the spectral norms ($\|\Delta_1\|_F = \|\Delta_1\|_2 = 0.5556$) indicates that the optimal unstructured perturbation $\Delta_1$ is of rank-1 that produces the target eigenvalue $\lambda = 1.5181$ with  $AM(\lambda,Y_{\text{POF}}^{(1)} )=2$ and $GM(\lambda,Y_{\text{POF}}^{(1)})=1$. This is consistent with classical unstructured perturbation theory, which suggests that the nearest defective matrix in the spectral norm can often be achieved via a rank-1 update. The fact that the proposed method recovers this property while minimizing the Frobenius norm confirms that the algorithm has identified an optimal candidate in the unstructured search space.

\textbf{(b) Structured search space:} We now restrict the perturbation to strictly preserve the Hessenberg structure of $X$.  By modifying $D$ to project only onto the non-zero upper Hessenberg parameters, the proposed algorithm perfectly preserves the Hessenberg structure of the matrix, yielding the  optimal perturbed matrix as
$$
Y_{\text{POF}}^{(2)} = \begin{bmatrix}
 3.1442 & -2.0105 &  0.9866 &  3.9682 \\
-0.1373 & -3.0015 &  0.8719 &  1.0238 \\
 0 & -3.9425 &  2.1662 &  1.1542 \\
 0 &  0 &  5.0152 &  1.1367
\end{bmatrix}.
$$
The Frobenius and the spectral norms of $\Delta_2= Y_{\text{POF}}^{(2)} - X $ are given by
$$
\|\Delta_2\|_F = 0.9257 \quad \text{and}\quad \|\Delta_2\|_2 =   0.8845.
$$
As expected, restricting the feasible set increases the required perturbation norm ($\|\Delta\|_F$ increases from $0.5556$ to $0.9257$). Nevertheless, the optimal perturbed matrix $Y_{\text{POF}}^{(2)}$ preserves the Hessenberg structure while achieving the target eigenvalue $\lambda = 1.5181$ with  $AM(\lambda,Y_{\text{POF}}^{(2)})=2$ and  $GM(\lambda,Y_{\text{POF}}^{(2)})=1$.

\medskip
\noindent
\textit{Sub-case 2.2: Unknown eigenvalue (Problem~\eqref{eq:prob1}, $\mu$-optimized)}

Next, we free the eigenvalue parameter and repeat the experiments in Sub-case 2.1. This allows the algorithm to search the entire spectrum without prior knowledge of the eigenvalues.

\textbf{(a) Unstructured search space:} Upon solving, the algorithm converges to $\mu^\star = 1.5202$  with $AM(\mu^\star,Y_{\text{POF}}^{(3)} )=2$ and $GM(\mu^\star,Y_{\text{POF}}^{(3)} )=1$, where the optimal perturbed matrix $Y_{\text{POF}}^{(3)} = X + \Delta_3$ is given as
$$
Y_{\text{POF}}^{(3)}  = \begin{bmatrix}
  3.0444 & -2.0113 &  0.9980 &  3.9778 \\
 -0.7000 & -3.0763 &  0.9868 &  0.8498 \\
 -0.3761 & -3.9044 &  2.0165 &  1.1883 \\
 -0.0307 &  0.0078 &  5.0014 &  1.0154
\end{bmatrix}.
$$
The Frobenius and spectral norms of 
$\Delta_3$ are given by 
$$
\|\Delta_3\|_F = 0.5556, \quad \|\Delta_3\|_2 = 0.5556.
$$
This implies that the optimal unstructured perturbation $\Delta_3$ is of rank-1. The algorithm automatically finds the optimal perturbation and achieves the distance $0.5556$. 

\textbf{(b) Structured search space:} By restricting the perturbation to the subspace of upper Hessenberg matrices, the algorithm finds the nearest structured matrix 
$$
Y_{\text{POF}}^{(4)} = \begin{bmatrix}
    3.1158 & -2.0277 &  1.0006 &  3.9219 \\
   -0.5233 & -3.1141 &  1.0026 &  0.6785 \\
         0 & -3.8652 &  1.9969 &  1.3800 \\
         0 &       0 &  5.0000 &  1.0014
\end{bmatrix}.
$$
with an eigenvalue $\mu^\star = 0.9612$ such that $AM(\mu^\star ,Y_{\text{POF}}^{(4)})=2$ and $GM(\mu^\star ,Y_{\text{POF}}^{(4)})=1$. 

By allowing only structured perturbation naturally requires a greater perturbation norm than the unstructured case
$$
\|\Delta_4\|_F = 0.7256, \quad \|\Delta_4\|_2 = 0.6610.
$$


To verify the internal consistency of the proposed method, we
take the optimal eigenvalues discovered autonomously in Sub-case 2.2
and run the experiments of Sub-case 2.1. For the unstructured search space (Problem~\eqref{eq:prob2} with $\lambda = 1.5202$), our algorithm recovers the exact optimal updated matrix $Y_{\text{POF}}^{(3)}$ found in the free-eigenvalue case. Similarly, for the structured search space (Problem~\eqref{eq:prob2} with $\lambda = 0.9612$), algorithm recovers the exact upper Hessenberg matrix $Y_{\text{POF}}^{(4)}$. These results demonstrate that the formulation is consistent; providing the optimal eigenvalue as a priori knowledge yields the exact same solutions as when the algorithm searches the spectrum autonomously. These results are summarized in Table~\ref{tab:comprehensive_comparison}.

\begin{table}[htbp]
\centering
\begin{tabular}{llccc}
\hline
Method & Target eigenvalue & Structure preserved & $\|\Delta\|_2$ & $\|\Delta\|_F$  \\
\hline
SVD-based (\cite{mengi2011locating})  & Free ($\mu^\star=1.5181$) & No & 0.5556 & 0.7330   \\
POF & Fixed ($\lambda=1.5181$) & No & 0.5556 & 0.5556  \\
POF & Fixed ($\lambda=1.5181$) & Yes & 0.8845 & 0.9257   \\
POF & Free ($\mu^\star = 1.5202$) & No & 0.5556 & 0.5556  \\
POF & Free ($\mu^\star = 0.9612$) & Yes & 0.6610 & 0.7256  \\
POF & Fixed ($\lambda = 1.5202$) & No & 0.5556 & 0.5556  \\
POF & Fixed ($\lambda = 0.9612$) & Yes & 0.6610 & 0.7256  \\
\hline
\end{tabular}
\caption{Comparison of POF and the existing literature for the matrix $X$ in Example~\ref{ex:comparison_literature}.}
\label{tab:comprehensive_comparison}
\end{table}

Next we analyze the movement of the eigenvalues of $X$ for Sub-case 2.1 with the fixed target eigenvalue $\lambda = 1.5181$ under structured perturbation.
The left plot in Figure~\ref{fig:eigenvalue_trajectories_case2} shows the unperturbed eigenvalues of the matrix $X$. The right plot traces the movement of the eigenvalues (surrounded by circles) of $X$ with respect to perturbation $X+t  \Delta_2$ as $t$ moves from $0$ to $1$, $\Delta_2$ being the minimal upper Hessenberg perturbation such that $X+\Delta_2$ has the prescribed eigenvalue $\lambda=1.5181$ (surrounded by a diamond) with $\mathrm{AM}(\lambda;Y^*_\lambda) = 2$ and $\mathrm{GM}(\lambda;Y^*_\lambda) = 1$. The eigenvalue curves originated from two complex conjugate eigenvalues move toward the real axis and meet exactly at the target eigenvalue $\lambda=1.5181$. This confirms that the algorithm successfully forces distinct eigenvalues to coalesce at a single point, making the updated matrix defective with $AM=2$ and $GM=1$ at the target eigenvalue.

\begin{figure}[ht]
\centering
\begin{subfigure}{0.50\textwidth}
    \centering
    \includegraphics[width=\linewidth]{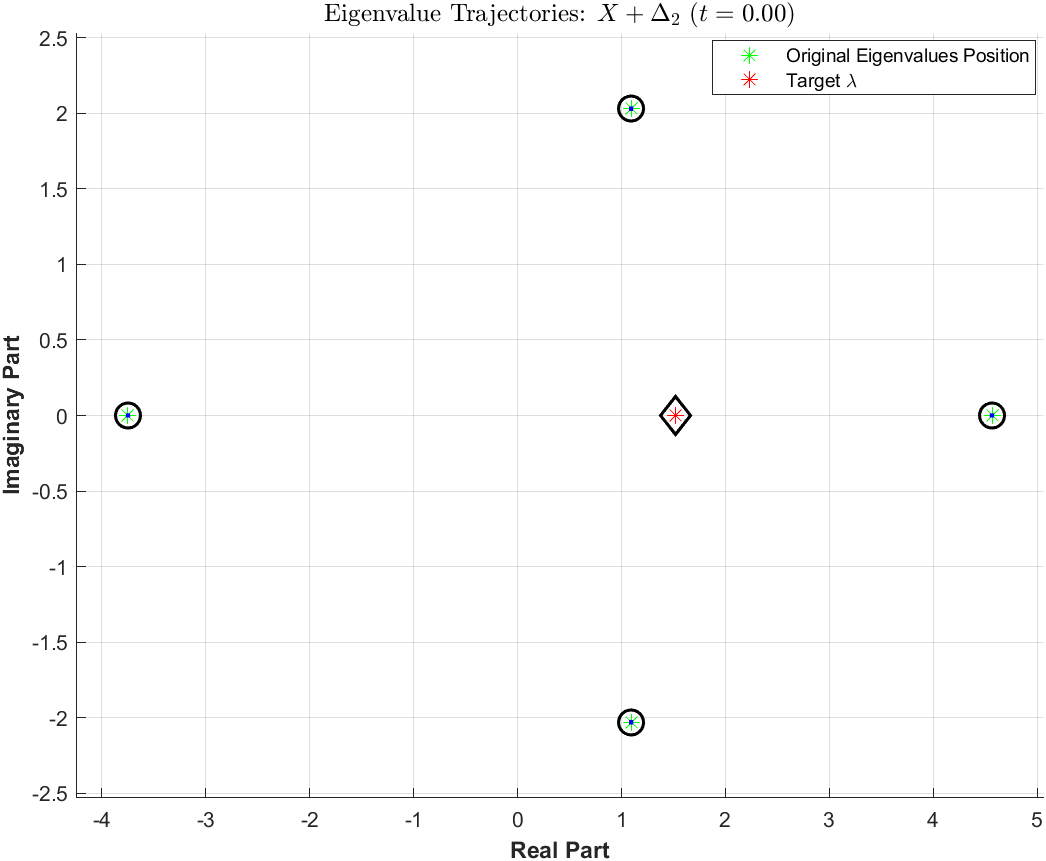}
    \caption{$t = 0.00$}
    \label{fig:T0_case2}
\end{subfigure}\hfill
\begin{subfigure}{0.50\textwidth}
    \centering
    \includegraphics[width=\linewidth]{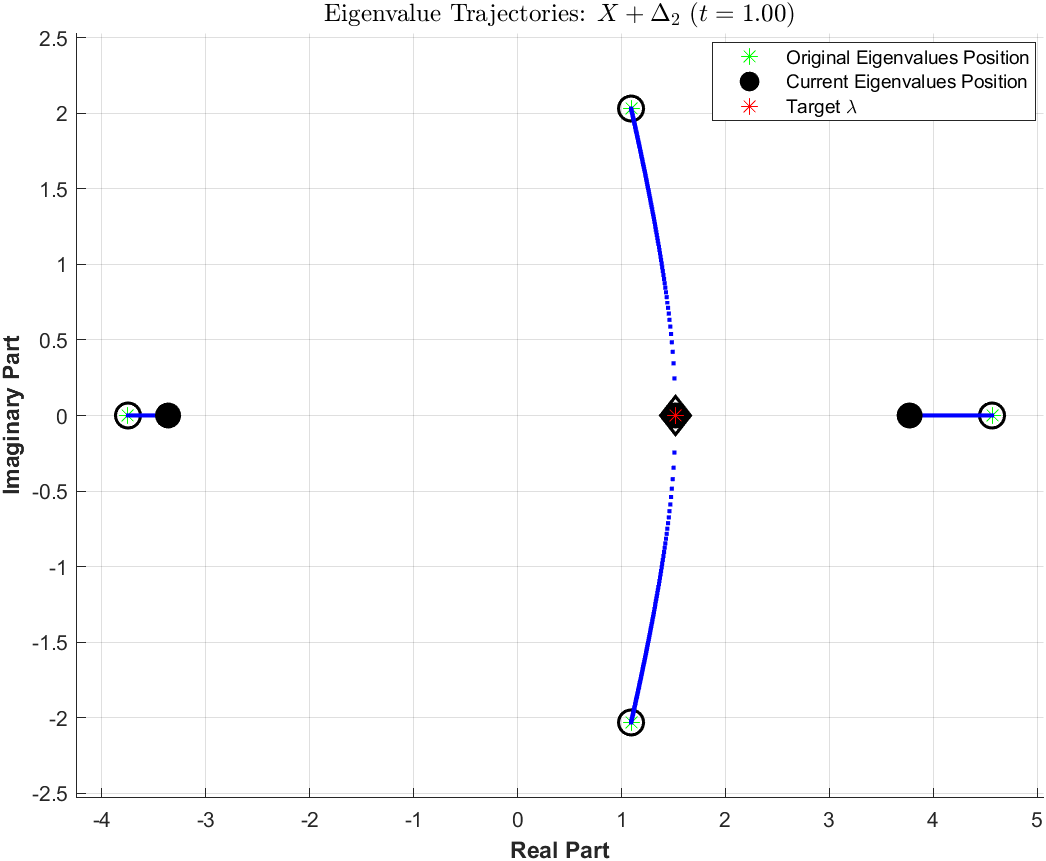}
    \caption{$t = 1.00$}
    \label{fig:T1_case2}
\end{subfigure}
\caption{Eigenvalue perturbation curves for the matrix $Y(t) = X + t \Delta_2$ in Example~\ref{ex:comparison_literature}. The left panel (a) shows the original eigenvalues before the perturbation ($t=0$). The right panel (b) shows the eigenvalue curves (dotted blue lines) and the final eigenvalues at $t=1$. Multiple eigenvalue curves meet exactly at the target eigenvalue $\lambda$ (surrounded by a diamond).}
\label{fig:eigenvalue_trajectories_case2}
\end{figure}

\end{example}

\section{Computational Scalability}

To assess the computational scalability of the proposed approach, numerical experiments were performed for matrix dimensions ranging from $n=5$ to $55$. For each matrix dimension $n$, the initial matrix was chosen as a random Toeplitz matrix generated from a randomly selected first row and first column. Also, for each problem size, the prescribed multiplicities were scaled proportionally according to
\[
m=\left\lfloor \frac{n}{2}\right\rfloor,
\qquad
p=\left\lfloor \frac{n}{4}\right\rfloor,
\]
thereby increasing the complexity of the optimization problem as the matrix dimension grows.

Figure~\ref{fig:scalability} presents the corresponding execution times. The results indicate that the proposed method remains computationally efficient for small and medium-sized problems ($n\leq 35$), with relatively modest increases in runtime. As the matrix size increases beyond this range, a more growth in computational cost is observed. This behavior is expected since larger values of $n$ lead to a substantial increase in the number of optimization variables and nonlinear constraints associated with the prescribed algebraic and geometric multiplicity conditions.

In particular, for $n\geq 50$, the runtime increases significantly, reflecting the higher computational burden of solving large-scale nonconvex optimization problems. Nevertheless, the algorithm successfully converged for all tested problem sizes.
\begin{figure}[htbp]
    \centering
    \includegraphics[width=0.8\textwidth]{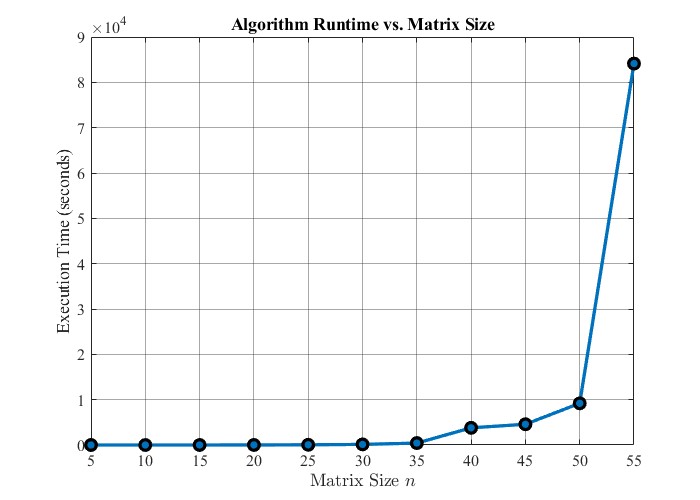}
    \caption{Execution time of Algorithm~\ref{alg:nearest_structured_AM_GM} versus matrix dimension $n$, with prescribed multiplicities scaled as $m=\lfloor n/2\rfloor$ and $p=\lfloor n/4\rfloor$.}
    \label{fig:scalability}
\end{figure}

\section{Conclusion}
\label{sec:conclusion}
In this work, we investigated how to compute the nearest structured matrix possessing an eigenvalue with specified lower bounds on both its algebraic and geometric multiplicities. We evaluated two distinct scenarios: one where the target eigenvalue is predetermined, and another where it acts as an unknown variable determined during the optimization process. By mapping Jordan chain relations into structural coordinates, we derived the exact necessary and sufficient conditions required for these structured perturbations to exist.
To minimize the Frobenius norm of the perturbation, we cast this task as a nested optimization problem constrained by the required multiplicity bounds. Finally, we implemented a nested optimization framework to reliably compute these optimal solutions. The extensive numerical experiments suggest that the proposed method accurately enforces multiplicity requirements while preserving the structure and maintaining minimal perturbations. Furthermore, numerical comparisons are presented to quantitatively validate the proposed framework against classical unstructured methods. Future work includes extensions to large-scale systems and polynomial eigenvalue problems.

\bigskip

\section*{Acknowledgments}
H.L. acknowledges the support from the University Grants Commission (UGC), Government of India, in the form of a Ph.D. fellowship (Award Reference: 221610039349). PS acknowledges the support of the SERB - CRG grant (CRG/2023/003221) by Government of India.

\section*{Declaration of competing interest} The authors declare no competing interests.

\section*{Data availability} No data was used for the research described in the article.

 \bibliographystyle{plain}
 \bibliography{ref}

@book{horn2012matrix,
  title={Matrix analysis},
  author={Horn, Roger A and Johnson, Charles R},
  year={2012},
  publisher={Cambridge University Press}
}

@book{trefethen2022numerical,
  title={Numerical linear algebra},
  author={Trefethen, Lloyd N and Bau, David},
  year={2022},
  publisher={SIAM}
}

@article{gray2006toeplitz,
  title={Toeplitz and circulant matrices: A review},
  author={Gray, Robert M and others},
  journal={Foundations and Trends in Communications and Information Theory},
  volume={2},
  number={3},
  pages={155--239},
  year={2006},
  publisher={Now Publishers, Inc.}
}

@Inbook{GilS2024,
author="Gillis, Nicolas
and Sharma, Punit",
editor="Guglielmi, Nicola
and Lubich, Christian",
title="Solving Matrix Nearness Problems via Hamiltonian Systems, Matrix Factorization, and Optimization",
bookTitle="Recent Stability Issues for Linear Dynamical Systems: Cetraro, Italy 2021",
year="2024",
publisher="Springer Nature Switzerland",
address="Cham",
pages="1--83",
}

@book{higham1988matrix,
  title={Matrix nearness problems and applications},
  author={Higham, Nicholas J},
  year={1988},
  publisher={University of Manchester. Department of Mathematics}
}

@article{higham2002computing,
  title={Computing the nearest correlation matrix—a problem from finance},
  author={Higham, Nicholas J},
  journal={IMA Journal of Numerical Analysis},
  volume={22},
  number={3},
  pages={329--343},
  year={2002},
  publisher={OUP}
}

@article{malyshev1999formula,
  title={A formula for the 2-norm distance from a matrix to the set of matrices with multiple eigenvalues},
  author={Malyshev, Alexander N},
  journal={Numerische Mathematik},
  volume={83},
  number={3},
  pages={443--454},
  year={1999},
  publisher={Springer}
}

@article{nazari2010computational,
  title={Computational aspect to the nearest matrix with two prescribed eigenvalues},
  author={Nazari, AM and Rajabi, D},
  journal={Linear Algebra and its Applications},
  volume={432},
  number={1},
  pages={1--4},
  year={2010},
  publisher={Elsevier}
}

@article{armentia2020nearest,
  title={Nearest matrix with a prescribed eigenvalue of bounded multiplicities},
  author={Armentia, Gorka and Gracia, Juan-Miguel and Velasco, Francisco-Enrique},
  journal={Linear Algebra and its Applications},
  volume={592},
  pages={188--209},
  year={2020},
  publisher={Elsevier}
}

@article{mengi2011locating,
  title={Locating a nearest matrix with an eigenvalue of prespecified algebraic multiplicity},
  author={Mengi, Emre},
  journal={Numerische Mathematik},
  volume={118},
  number={1},
  pages={109--135},
  year={2011},
  publisher={Springer}
}

@article{kokabifar2016nearest,
  title={Nearest matrix with prescribed eigenvalues and its applications},
  author={Kokabifar, Esmaeil and Loghmani, Ghasem Barid and Karbassi, Seyed-Mehdi},
  journal={Journal of Computational and Applied Mathematics},
  volume={298},
  pages={53--63},
  year={2016},
  publisher={Elsevier}
}

@book{borsdorf2012structured,
  title={Structured matrix nearness problems: Theory and algorithms},
  author={Borsdorf, Ruediger},
  year={2012},
  publisher={The University of Manchester (United Kingdom)}
}

@article{lalhriatpuia2025symmetric,
  title={Symmetric structured finite element model updating with prescribed partial eigenvalues while maintaining no spillover},
  author={Lalhriatpuia, H and Saha, Tanay},
  journal={Journal of Computational and Applied Mathematics},
  pages={116698},
  year={2025},
  publisher={Elsevier}
}

@book{higham1985nearness,
  title={Nearness Problems in Numerical Linear Algebra},
  author={Higham, Nicholas J},
  year={1985},
  publisher={The University of Manchester (United Kingdom)}
}

@article{higham1988computing,
  title={Computing a nearest symmetric positive semidefinite matrix},
  author={Higham, Nicholas J},
  journal={Linear Algebra and its Applications},
  volume={103},
  pages={103--118},
  year={1988},
  publisher={Elsevier}
}

@article{gracia2005nearest,
  title={Nearest matrix with two prescribed eigenvalues},
  author={Gracia, Juan-Miguel},
  journal={Linear Algebra and its Applications},
  volume={401},
  pages={277--294},
  year={2005},
  publisher={Elsevier}
}

@article{lippert2005fixing,
  title={Fixing two eigenvalues by a minimal perturbation},
  author={Lippert, Ross A},
  journal={Linear Algebra and its Applications},
  volume={406},
  pages={177--200},
  year={2005},
  publisher={Elsevier}
}

@article{kressner2014generalized,
  title={Generalized eigenvalue problems with specified eigenvalues},
  author={Kressner, Daniel and Mengi, Emre and Naki{\'c}, Ivica and Truhar, Ninoslav},
  journal={IMA Journal of Numerical Analysis},
  volume={34},
  number={2},
  pages={480--501},
  year={2014},
  publisher={Oxford University Press}
}

@article{mehrmann2001structure,
  title={Structure-preserving methods for computing eigenpairs of large sparse skew-{H}amiltonian/{H}amiltonian pencils},
  author={Mehrmann, Volker and Watkins, David},
  journal={SIAM Journal on Scientific Computing},
  volume={22},
  number={6},
  pages={1905--1925},
  year={2001},
  publisher={SIAM}
}

@phdthesis{nunez2019structured,
  title={Structured perturbation theory for eigenvalues of symplectic matrices},
  author={Nu{\~n}ez, Fredy Ernesto Sosa and Carre{\~n}o, Julio Moro},
  year={2019},
  school={Universidad Carlos III de Madrid}
}

@article{butta2015differential,
  title={Differential equations for real-structured defectivity measures},
  author={Butt{\`a}, Paolo and Guglielmi, Nicola and Manetta, Manuela and Noschese, Silvia},
  journal={SIAM Journal on Matrix Analysis and Applications},
  volume={36},
  number={2},
  pages={523--548},
  year={2015},
  publisher={SIAM}
}

@book{nocedal2006numerical,
  title={Numerical optimization},
  author={Nocedal, Jorge and Wright, Stephen J},
  year={2006},
  publisher={Springer}
}

@book{dempe2002foundations,
  title={Foundations of bilevel programming},
  author={Dempe, Stephan},
  year={2002},
  publisher={Springer}
}

@article{orucc2016number,
  title={On number of partitions of an integer into a fixed number of positive integers},
  author={Oru{\c{c}}, A Yavuz},
  journal={Journal of Number Theory},
  volume={159},
  pages={355--369},
  year={2016},
  publisher={Elsevier}
}

@article{Saha24082026,
author = {Tanay Saha},
title = {Nearest structured polynomial matrix having an eigenvalue with prescribed geometric multiplicity},
journal = {Linear and Multilinear Algebra},
volume = {0},
number = {0},
pages = {1--28},
year = {2026},
publisher = {Taylor \& Francis},
doi = {10.1080/03081087.2026.2722664},


URL = { 
    
        https://doi.org/10.1080/03081087.2026.2722664
    
    

},
eprint = { 
    
        https://doi.org/10.1080/03081087.2026.2722664
    
    

}

}
\end{document}